\newcommand{\nc}{\newcommand}
\nc{\dmo}{\DeclareMathOperator}
\nc{\nt}{\newtheorem}
\theoremstyle{definition}
\nc{\Z}{\mathbb{Z}}
\nc{\R}{\mathbb{R}}
\nc{\Q}{\mathbb{Q}}
\nc{\margin}[1]{\marginpar{\tiny #1}}
\nc{\p}[1]{\smallskip\noindent{{\bf #1}}}
\begin{document}
\include{diagram}

\title{The isomorphism problem for small-rose generalized Baumslag-Solitar groups}

\author{Daxun Wang}
\address{Department of Mathematics,
		University at Buffalo - SUNY,
		Buffalo, NY, 14260}
\email{daxunwan@buffalo.edu}

\keywords{Generalized Baumslag-Solitar groups, the isomorphism problem, deformation space of trees.}

\subjclass[2000]{20E08, 20F10, 20F65.}

\begin{abstract}
A generalized Baumslag-Solitar group is a finitely generated group that acts on a tree with infinite-cyclic vertex and edge stabilizers. In this paper, we show that the isomorphism problem is solvable for small-rose non-ascending generalized Baumslag-Solitar groups.
\end{abstract}

\maketitle

\section{Introduction}
In 1912, Dehn formulated three fundamental decision problems of groups: the word problem, the conjugacy problem and the isomorphism problem. The word problem and the conjugacy problem for a group address how to decide whether two words in some finite generating set represent the same or conjugate elements. On the other hand, the isomorphism problem consists of determining whether two given group presentations define isomorphic groups. The answers to all of these problems, either positive or negative, have a major impact on studying structure as well as topological properties of groups.

In this paper, we are particularly interested in the isomorphism problem of  \textit{generalized Baumslag-Solitar groups} (GBS) groups. These groups are finitely generated groups acting on a tree with infinite-cyclic vertex and edge stabilizers. Basic examples of GBS groups include the Baumslag-Solitar groups BS($m,n$)$=\langle x,t\ |\ tx^mt^{-1}=x^n\rangle$, as well as the knot group $\langle x,y\ |\ x^p=y^q\rangle$ of a $(p,q)$-torus knot. GBS groups have been studied in many aspects such as JSJ decomposition of groups \cite{M03}, splitting of groups \cite{M06}, cohomological dimension \cite{K90'}, the automorphism group \cite{G07}, \cite{C09} and one-relator groups \cite{Mc91}. Some subclasses of GBS groups were classified up to quasi-isometry in \cite{FM98} and \cite{W01}.

The isomorphism problem has only been shown to be solvable in special cases. Forester solved the isomorphism problem for GBS groups with no non-trivial integral moduli \cite{M06}. Levitt showed that the isomorphism problem is solvable for GBS groups $G$ such that Out($G$) does not contain a non-abelian free group \cite{G07}. Clay and Forester solved the case for GBS groups whose labeled graphs have first Betti number at most one \cite{MM08}. Dudkin solved the case for GBS groups where one of the labeled graphs has a sole mobile edge \cite{FA17}.

Recently, a particular subclass of GBS groups called the \textit{$n$-rose GBS groups}, has been studied in relation with boundary actions and $C^{\ast}$-simplicity \cite{MW22}. The definition of these groups is as follows: 
\begin{definition}[$n$-rose GBS groups]
$$G=\langle x, t_1,..., t_n\ |\ t_ix^{p_i}t_i^{-1}=x^{q_i}, \mbox{for some}\ p_i,q_i\in \mathbb{Z}\setminus\{0\}, i=1,...,n \rangle$$
\end{definition}

By Bass-Serre theory \cite{BK90}, these groups can be realized as the fundamental group of a labeled graph $\Gamma$ such that $\Gamma$ is a wedge sum of $n$ loops and each loop carries two labels $p_i$ and $q_i$ at its endpoints. $\Gamma$ is then called a \textit{$n$-rose graph}. For the isomorphism problem of $n$-rose GBS groups, Casals-Ruiz, Kazachkov and Zakharov solved the case where $p_i=1$ and $q_i=q_j$ for all $i,j$ \cite{CRKZ21}. However, these groups are a family of ascending GBS groups. Motivated by their results, our goal is to study the isomorphism problem for non-ascending $n$-rose GBS groups.

Our first main result is Theorem 1.2. We show that:
\begin{theorem}
Let $G$ be a $n$-rose GBS group satisfying either one of the following conditions:
\begin{itemize}[topsep=0pt, itemsep=1ex]
\item[(a)] the underlying $n$-rose graph $\Gamma$ has only one mobile edge; or
\item[(b)] $n\leq 3$.
\end{itemize} 
Then there is an algorithm to determine whether $G$ is ascending.
\end{theorem}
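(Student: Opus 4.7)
The approach is to reduce the question of ascendingness to a finite search in the deformation space $\mathcal{D}(G)$ of labeled graphs representing $G$, where graphs are related by slide, induction, and $A^{\pm 1}$ moves in the sense of Forester. The key criterion is that $G$ is ascending if and only if some labeled graph in $\mathcal{D}(G)$ contains a loop with $\pm 1$ as one of its endpoint labels; the algorithmic task then becomes deciding whether such a representative is reachable from the input $n$-rose $\Gamma$.

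The decisive structural feature in both cases is the dichotomy between mobile and non-mobile edges. Slide moves alter only the labels of the edge being slid, and mobility of an edge is an invariant of $\mathcal{D}(G)$; hence non-mobile loops retain their original labels throughout the deformation space. It follows that any non-mobile loop of $\Gamma$ already carrying a label of $\pm 1$ is detected directly from the input, and the substantive task is to decide whether the labels of the \emph{mobile} loops can be driven to $\pm 1$ by a sequence of slides.

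In case (a), with a single mobile edge $e$, each admissible slide multiplies or divides one of the labels of $e$ by a factor tied to the (unchanging) labels of the adjacent non-mobile loops. Following the sole-mobile-edge analysis of Dudkin, I would argue that the set of label pairs attainable by $e$ lies in an explicit, finitely generated submonoid of $\mathbb{Q}^{\times}\times\mathbb{Q}^{\times}$, inside which the occurrence of $\pm 1$ is decidable. For case (b), the subcase $n=1$ is immediate, as $G$ is a Baumslag--Solitar group $BS(p_1,q_1)$ and ascendingness is read straight off the labels. For $n=2,3$, several mobile edges may coexist and their slides may interact; the plan is to bound, in terms of the input labels and the modular homomorphism of $G$, the length of the slide sequences that must be tried before every essentially distinct label configuration has been inspected, and then check each configuration for a $\pm 1$ label.

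The chief obstacle is producing an effective a priori bound on the slide search for $n=3$ in case (b). Because slides on different mobile edges can be composed in many orders and propagate label changes across the graph, the combinatorial state space is potentially very large, and termination is not automatic. Establishing a suitable arithmetic monovariant on configurations, or showing that slide sequences beyond a computable length only revisit label configurations already equivalent under the enumerated moves, is the technical crux of the theorem.
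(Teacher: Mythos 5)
There is a genuine gap here, and you have in fact located it yourself: your plan for case (b) with $n=2,3$ ends by declaring that an effective bound on the slide search is "the technical crux," but no such bound is supplied, and the paper's proof does not proceed by bounding search length at all. The paper's route has two ingredients that your sketch is missing. First, Corollary \ref{coro: ascending=smc} (via Proposition \ref{prop: smc} and Lemmas \ref{lemma: slide over non-mobile edge}--\ref{lemma: 3 rose slide over mobile edge}) shows that under hypothesis (a) or (b) the property \enquote{contains a strict monotone cycle} is preserved along every slide, induction, and $\mathscr{A}^{\pm 1}$-move between reduced graphs; consequently $G$ is ascending if and only if the \emph{input} graph $\Gamma$ already has a strict monotone cycle. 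This eliminates the deformation-space search entirely, so no termination argument is needed. The hard work is a case analysis of how sliding an edge of a strict monotone cycle (including over the mobile edge $e$ or $\overline{e}$, which is exactly where the restriction $n\leq 3$ enters --- the paper's Example 3.6 shows the relevant lemma fails for $n=4$) transforms that cycle into a new one. Second, Proposition \ref{prop: smc algorithm } gives the decision procedure for a fixed rose: encode labels by their prime-exponent vectors, use Dudkin's finite description of the labels attainable on $\overline{e}$ by slides as a finite union of lattice cones, intersect with the divisibility conditions defining a strict monotone cycle, and invoke Lenstra's integer programming algorithm. Your \enquote{finitely generated submonoid} remark for case (a) gestures at this second ingredient but does not develop it, and you offer nothing in place of the first.

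Two smaller but substantive inaccuracies. The witness for ascendingness is a \emph{strict} ascending loop ($\lambda(e)=\pm 1$, $\lambda(\overline{e})\neq\pm 1$), and more importantly the right object to detect is a strict monotone cycle: a strict \emph{virtually} ascending loop (neither label $\pm 1$) also certifies ascendingness after an $\mathscr{A}$-move, and that move leaves the class of rose graphs, so \enquote{drive a mobile label to $\pm 1$ by slides} is not the correct target even in case (a). Also, your claim that non-mobile loops \enquote{retain their original labels throughout the deformation space} is false --- non-mobile edges can be slid and their labels do change; what is true (Clay--Forester, Proposition 3.10) is that they attain only finitely many configurations. Neither slip is fatal to the overall architecture, but the absence of the invariance result and of a concrete decision procedure for a single graph means the proposal does not yet constitute a proof.
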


Forester proved that any two reduced labeled graphs representing the same GBS group $G$ are contained in the canonical deformation space of $G$ \cite{M02}. In addition, if $G$ is non-ascending, there is a sequence of slide moves taking one labeled graph to the other. Our main technical result, Proposition \ref{prop: e commute}, shows that if $G$ is a non-ascending $n$-rose GBS group with $n\leq 3$, then we can rearrange this slide sequence by switching slide moves of \textit{mobile edges}, which are introduced by Clay and Forester in \cite{MM08}. From these we deduce our second main result Theorem 1.3. We show that:

\begin{theorem} The isomorphism problem is solvable for non-ascending $n$-rose GBS groups with $n\leq 3$.

Example \ref{example: 3-rose non-ascending} exhibits an infinite family of non-ascending 3-rose GBS groups to which this Theorem applies.
\end{theorem}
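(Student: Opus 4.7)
The plan is to reduce the isomorphism problem for non-ascending $n$-rose GBS groups with $n\le 3$ to an algorithmically decidable check on reduced labeled graphs. Given presentations defining groups $G_1$ and $G_2$, first apply Theorem~1.2 to decide whether each is ascending: if exactly one is ascending they are non-isomorphic, and otherwise both are non-ascending and Forester's deformation-space machinery applies. Reduce the two labeled graphs to reduced graphs $\Gamma_1, \Gamma_2$, which is effective. By Forester's theorem \cite{M02}, $G_1\cong G_2$ if and only if $\Gamma_1$ and $\Gamma_2$ lie in the same deformation space, and in the non-ascending case this occurs if and only if there is a finite sequence of slide moves (together with admissible sign changes and graph automorphisms) taking $\Gamma_1$ to $\Gamma_2$. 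Thus the isomorphism question becomes: decide whether $\Gamma_2$ is slide-equivalent to $\Gamma_1$.

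A priori the set of slide sequences is infinite, so the central issue is to control them. The key step is to invoke Proposition~\ref{prop: e commute}: any slide sequence between two reduced graphs can be rearranged so that slides of mobile edges are commuted into a prescribed order and grouped separately from slides of non-mobile edges. Under the hypothesis $n\le 3$, this puts an arbitrary slide sequence into a normal form whose length is bounded by data read off from $\Gamma_1$ and $\Gamma_2$, namely the labels on the loops, the modular homomorphism of $G_1$, and the bounded combinatorics of the underlying rose. One then enumerates the finitely many reduced graphs reachable from $\Gamma_1$ by such normal-form sequences and tests whether $\Gamma_2$, up to admissible sign changes and graph automorphisms, appears in this list.

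The hard part will be establishing the length bound on the normal form. After using Proposition~\ref{prop: e commute} to commute mobile-edge slides past one another, one must verify that the cumulative effect on the labels of $\Gamma_1$ is controlled, so that only finitely many distinct reduced graphs arise. This will reduce to a case analysis on an $n$-rose with $n\le 3$, using the explicit description of mobile edges from Clay--Forester \cite{MM08} together with invariants such as the modular homomorphism. The case $n=1$ is essentially covered by \cite{MM08} (and more generally by Theorem~1.2(a) combined with their methods); the novelty for $n=2,3$ is to show that interactions between multiple mobile edges remain tractable, which is precisely what Proposition~\ref{prop: e commute} delivers. Finally, Example~\ref{example: 3-rose non-ascending} supplies an infinite family of non-ascending 3-rose GBS groups to which the resulting algorithm applies non-vacuously.
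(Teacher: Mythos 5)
Your overall skeleton matches the paper's: reduce both graphs, use the fact that in a non-ascending deformation space any two reduced trees are related by slides (Corollary \ref{coro: slide sequence}), separate non-mobile slides from mobile slides, and use Proposition \ref{prop: e commute} to sort the mobile slides edge-by-edge. However, the step you yourself flag as ``the hard part'' --- a length bound on the normal form so that only finitely many reduced graphs are reachable --- is a genuine gap, and it is not how the paper closes the argument. By definition a mobile edge admits a strict integer cycle, so sliding it around that cycle repeatedly produces infinitely many distinct labels; the slide space $\mbox{S}(\Gamma,e)$ of a mobile edge is typically infinite, and no finite enumeration of reachable reduced graphs exists. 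Only the non-mobile part is finite (the paper uses that $|\mbox{S}_{NM}(\Gamma)|$ is finite, by Proposition 3.10 in \cite{MM08}, and matches non-mobile subgraphs there).

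What the paper does instead for the mobile part is to sidestep any length bound: after Proposition \ref{prop: e commute} reduces the question to whether a single mobile edge $e$ can be slid to realize the target labels $\lambda'(f)$, $\lambda'(\overline{f})$, it invokes Dudkin's results \cite{FA17} to describe the set of all labels achievable on $e$ by slide moves as a computable finite union of shifted cones in exponent vectors (the sets $\Lambda(\overline{e})$), and then tests membership of the target label by solving an explicit integer linear system. This semilinear-set description is the decisive decidability input, and your proposal contains no substitute for it. Without it, ``enumerate the finitely many reachable graphs'' is not available, so the proposed algorithm does not terminate with a correct answer.
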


\textbf{Acknowledgements.} This research was supported by travel funding from National Science Foundation Grant No. DMS–1812021. I would like to thank my advisor Johanna Mangahas for her exceptional guidance and encouragement. I would also like to thank Matt Clay, F.A. Dudkin and Max Forester for helpful discussions.

\section{Preliminaries}

\subsection{Generalized Baumslag-Solitar group}
A \textit{graph} $\Gamma=(V(\Gamma),E(\Gamma))$ is a pair of sets where $V(\Gamma)$ are the vertices and $E(\Gamma)$ are the edges. An edge comes with two maps $o,t$ which associate its initial and terminal vertex, respectively. In addition, there is a fixed-point-free involution $e\mapsto \overline{e}$ of $E(\Gamma)$ which reverses the orientation of edges.

A \textit{graph of groups} is a graph $\Gamma$ such that we associate to every vertex $v\in V(\Gamma)$ a group $G_v$ and, for every edge $e\in E(\Gamma)$, a group $G_e$ together with an injective homomorphism $\phi_e:G_e\hookrightarrow G_{t(e)}$. Furthermore, we require $G_e=G_{\overline{e}}$ and thus have $\phi_{\overline{e}}:G_{e}\hookrightarrow G_{o(e)}$. A \textit{G-tree} is a simplicial tree $T$ together with an action of $G$ by automorphism without inversion of edges. By Bass-Serre theory \cite{BK90}, the quotient graph $T/G$ has a graph of groups structure with fundamental group $G$.

A \textit{generalized Baumslag-Solitar group} (GBS group) is a finitely generated group $G$ which acts on a tree with all vertex and edge stabilizers infinite-cyclic. The tree is called a \textit{GBS tree}. In the quotient graph of groups, every vertex and edge group is isomorphic to $\mathbb{Z}$. If we choose generators for vertex and edge groups, each inclusion map $G_e \hookrightarrow G_{o(e)}$ is given by multiplication by a non-zero integer, which we denote by $\lambda(e)$. Thus any such quotient graph of groups can be realized by a labeled graph. Specifically, a \textit{labeled graph} is a finite graph $\Gamma$ where each oriented edge $e$ has a label $\lambda(e)\in \mathbb{Z}\setminus \{0\}$ at its origin. Notice that if we replace a generator of an edge group $G_e$ by its inverse, it changes the signs of $\lambda(e)$ and $\lambda(\overline{e})$. If we replace a generator of a vertex group $G_v$ by its inverse, it changes the signs $\lambda(e)$ for all edges $e$ emanating from $v$. These are called \textit{admissible sign changes}. By Remark 2.3 in \cite{M06}, given a GBS tree $T$, the quotient labeled graph $\Gamma=T/G$ is well-defined up to admissible sign changes.

A GBS group is \textit{non-elementary} if it is not isomorphic to one of the following groups: $\mathbb{Z}$, $\mathbb{Z}^2$ or the Klein bottle group.

\subsection{Deformation space and deformation moves}
Given a GBS tree $T$, an edge $e$ is \textit{collapsible} if $G_e=G_{o(e)}$ and its endpoints are not in the same orbit. If we collapse every edge in the orbit of $e$ to a vertex, we obtain a new tree $T'$. We say that $T'$ is obtained from $T$ by a \textit{collapse move}. In the quotient labeled graph sense, this move means we shrink an edge $e$ with label $\lambda(e)=1$ down to a vertex. The reverse of this move is called an \textit{expansion move}. These two moves correspond to the graph of groups isomorphism $A\ast_C C\cong A$. A finite sequence of these moves is called an \textit{elementary deformation}.
\begin{center}
\begin{tikzpicture}
      \tikzset{enclosed/.style={draw, circle, inner sep=0pt, minimum size=.1cm, fill=black}}
      
      \node[enclosed, label={right, yshift=.2cm: \small $n$}] at (2.25,3.25) {};
      \node[enclosed, label={left, yshift=.2cm: \footnotesize $1$}] at (4.5,3.25) {};
      \node[enclosed] at (11,3.25) {};

      \draw[line width=0.5mm] (2.25,3.25) -- (4.5,3.25) node[midway, sloped, above] {};
      \draw (2.25,3.25) -- (1.75, 3.75) node[midway, right] {\small $a$};
      \draw (2.25,3.25) -- (1.75, 2.75) node[midway, right] {\small $b$};
      \draw (4.5,3.25) -- (5, 3.75) node[midway, right] {\small $c$};
      \draw (4.5,3.25) -- (5, 2.75) node[midway, right] {\small $d$};

      \draw[->, thick] (6.5, 3.3) -- (9, 3.3) node[midway, above] {\footnotesize collapse};
      \draw[<-, thick] (6.5, 3.1) -- (9, 3.1) node[midway, below] {\footnotesize expansion};
      
      \draw (11,3.25) -- (10.5, 3.75) node[midway, left] {\small $a$};
      \draw (11,3.25) -- (10.5, 2.75) node[midway, left] {\small $b$}; 
      \draw (11,3.25) -- (11.5, 3.75) node[midway, right]{\small $nc$};
      \draw (11,3.25) -- (11.5, 2.75) node[midway, right]{\small $nd$};
\end{tikzpicture}
\end{center}
A $G$-tree is called \textit{reduced} if there is no collapsible edge. Given a $G$-tree $T$, the \textit{deformation space} $\mathscr{D}$ of $T$ is the set of all $G$-trees related to $T$ by an elementary deformation. Forester proved that if $G$ is a non-elementary GBS group, all such $G$-trees belong to the same deformation space \cite{M02}. We call it the \textit{canonical deformation space} of $G$.

\begin{definition}
Let $\Gamma$ be a labeled graph representing a GBS group $G$. A loop $e$ is called an \textit{ascending loop} if $\lambda(e)=\pm 1$. In addition, it is a \textit{strict ascending loop} if $\lambda(\overline{e})\neq \pm 1$. A loop $e$ is called a \textit{virtually ascending loop} if $\lambda(e)\neq \pm 1$ and $\lambda(e)|\lambda(\overline{e})$, and is a \textit{strict virtually ascending loop} if, in addition,  $\lambda(e)\neq \pm \lambda(\overline{e})$. We say $G$ is \textit{ascending} if the canonical deformation space of $G$ contains a GBS tree whose labeled graph has a strict ascending loop. Otherwise $G$ is \textit{non-ascending}.
\end{definition}

Next we will illustrate deformation moves between GBS trees. These moves have been defined and studied in \cite{M06} and \cite{MM09}.

There are two forms of \textit{slide moves}:
\begin{center}
\begin{tikzpicture}
      \tikzset{enclosed/.style={draw, circle, inner sep=0pt, minimum size=.1cm, fill=black}}
      
      \node[enclosed, label={right, yshift=-.2cm: \small $m$}] at (2.25,3.25) {};
      \node[enclosed, label={left, yshift=-.2cm: \small $n$}] at (4.5,3.25) {};
      \node[enclosed, label={right, yshift=-.2cm: \small $m$}] at (9.25,3.25) {};
      \node[enclosed, label={left, yshift=-.2cm: \small $n$}] at (11.5,3.25) {};

      \draw (2.25,3.25) -- (4.5,3.25) node[midway, sloped, above] {};
      \draw (2.25,3.25) -- (1.75, 3.75) node[midway, right] {};
      \draw (2.25,3.25) -- (1.35, 3.25) node[midway, right] {};
      \draw (2.25,3.25) -- (1.75, 2.75) node[midway, right] {};
      \draw[line width=0.5mm] (2.25,3.25) -- (2.5, 4) node[midway, right] {\small $lm$};
      \draw (4.5,3.25) -- (5, 3.75) node[midway, right] {};
      \draw (4.5,3.25) -- (5.45, 3.25) node[midway, right] {};
      \draw (4.5,3.25) -- (5, 2.75) node[midway, right] {};

      \draw[->, thick] (6.25, 3.25) -- (7.75, 3.25) node[midway, above] {\footnotesize slide};
      
      \draw (9.25,3.25) -- (11.5,3.25) node[midway, sloped, above] {};
      \draw (9.25,3.25) -- (8.75, 3.75) node[midway, right] {};
      \draw (9.25,3.25) -- (8.35, 3.25) node[midway, right] {};
      \draw (9.25,3.25) -- (8.75, 2.75) node[midway, right] {};
      \draw[line width=0.5mm] (11.5,3.25) -- (11.25, 4) node[midway, left] {\small $ln$};
      \draw (11.5,3.25) -- (12, 3.75) node[midway, right] {};
      \draw (11.5,3.25) -- (12.45, 3.25) node[midway, right] {};
      \draw (11.5,3.25) -- (12, 2.75) node[midway, right] {};
\end{tikzpicture}
\end{center}

and 

\begin{center}
\begin{tikzpicture}
      \tikzset{enclosed/.style={draw, circle, inner sep=0pt, minimum size=.1cm, fill=black}}
      
      \node[enclosed, label={right, yshift=.2cm: \small $n$}] at (2.35,3.25) {};
      \node[enclosed, label={right, yshift=-.2cm: \small $m$}] at (2.35,3.25) {};
      \node[enclosed, label={right, yshift=.2cm: \small $n$}] at (8.35,3.25) {};
      \node[enclosed, label={right, yshift=-.2cm: \small $m$}] at (8.35,3.25) {};
      \node[circle, draw, minimum size=1.3cm] at (3,3.25) {};
      \node[circle, draw, minimum size=1.3cm] at (9,3.25) {};

      \draw (2.35,3.25) -- (1.85, 4) node[midway, right] {};
      \draw[line width=0.5mm]  (2.35,3.25) -- (1.25, 3.25) node[midway, above] {\small{$lm$}};
      \draw (2.35,3.25) -- (1.85, 2.75) node[midway, right] {};
      \draw (2.35,3.25) -- (2.1, 2.5) node[midway, right] {};

      \draw[->, thick] (4.75, 3.25) -- (6.25, 3.25) node[midway, above] {\footnotesize slide};
            
      \draw (8.35,3.25) -- (7.85, 4) node[midway, right] {};
      \draw[line width=0.5mm]  (8.35,3.25) -- (7.25, 3.25) node[midway, above] {\small{$ln$}};
      \draw (8.35,3.25) -- (7.85, 2.75) node[midway, right] {};
      \draw (8.35,3.25) -- (8.1, 2.5) node[midway, right] {};
\end{tikzpicture}
\end{center}

An \textit{induction move} is defined as follows:

\begin{center}
\begin{tikzpicture}
      \tikzset{enclosed/.style={draw, circle, inner sep=0pt, minimum size=.1cm, fill=black}}
      
      \node[enclosed, label={left, yshift=.2cm: \footnotesize $1$}] at (2,3.25) {};
      \node[enclosed, label={left, yshift=-.2cm: \small $lm$}] at (2,3.25) {};
      \node[enclosed, label={left, yshift=.2cm: \footnotesize $1$}] at (8,3.25) {};
      \node[enclosed, label={left, yshift=-.2cm: \small $lm$}] at (8,3.25) {};
      \node[circle, draw, minimum size=1.3cm] at (1.35,3.25) {};
      \node[circle, draw, minimum size=1.3cm] at (7.35,3.25) {};

      \draw (2,3.25) -- (2.5, 3.75) node[midway, right] {\small $a$};
      \draw (2,3.25) -- (2.5, 2.75) node[midway, right] {\small $b$};

      \draw[<->, thick] (3.5, 3.25) -- (6, 3.25) node[midway, above] {\footnotesize induction};
            
      \draw (8,3.25) -- (8.5, 3.75) node[midway, right] {\small $la$};
      \draw (8,3.25) -- (8.5, 2.75) node[midway, right] {\small $lb$};
\end{tikzpicture}
\end{center}

An \textit{$\mathscr{A}^{\pm 1}$-move} is defined as follows:

\begin{center}
\begin{tikzpicture}
      \tikzset{enclosed/.style={draw, circle, inner sep=0pt, minimum size=.1cm, fill=black}}
      
      \node[enclosed, label={left, yshift=.2cm: \footnotesize $1$}] at (0.5,3.25) {};
      \node[enclosed, label={left, yshift=-.2cm: \small $lm$}] at (0.5,3.25) {};
      \node[enclosed, label={above, xshift=0.1cm, yshift=.1cm: \small $l$}] at (0.5,3.25) {};
      \node[enclosed, label={above, xshift=-0.1cm, yshift=.1cm: \small $k$}] at (2,3.25) {};
      \node[enclosed, label={left, yshift=.2cm: \small $k$}] at (8,3.25) {};
      \node[enclosed, label={left, yshift=-.2cm: \small $klm$}] at (8,3.25) {};
      \node[circle, draw, minimum size=1.3cm] at (-0.15,3.25) {};
      \node[circle, draw, minimum size=1.3cm] at (7.35,3.25) {};

      \draw (2,3.25) -- (2.5, 3.75) node[midway, right] {\small $a$};
      \draw (2,3.25) -- (2.5, 2.75) node[midway, right] {\small $b$};
      \draw (0.5,3.25) -- (2, 3.25) node[midway, right] {};

      \draw[->, thick] (3.5, 3.3) -- (6, 3.3) node[midway, above] {\footnotesize $\mathscr{A}^{-1}$};
      \draw[<-, thick] (3.5, 3.1) -- (6, 3.1) node[midway, below] {\footnotesize $\mathscr{A}$};
            
      \draw (8,3.25) -- (8.5, 3.75) node[midway, right] {\small $a$};
      \draw (8,3.25) -- (8.5, 2.75) node[midway, right] {\small $b$};
\end{tikzpicture}
\end{center}
In the \textit{$\mathscr{A}^{\pm 1}$-move} we require that $k,l\neq \pm 1$ and the left vertex has no other edges incident to it.

Now we state the main theorem in \cite{MM09} which we will use in the latter of this paper.

\begin{theorem}\cite[Theorem 1.1]{MM09}\label{theorem: 3 moves}
In a deformation space of cocompact $G$-trees, any two reduced trees are related by a finite sequence of slides, inductions, and $\mathscr{A}^{\pm 1}$-moves, with all intermediate trees reduced.
\end{theorem}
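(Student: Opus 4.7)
The plan is to reduce the isomorphism problem to a terminating enumeration of reduced labeled graphs related by slide moves, with Theorem 1.2 and Proposition \ref{prop: e commute} doing the main work. Given two presentations of $n$-rose GBS groups with $n\le 3$, I would first apply Theorem 1.2 to decide whether each group is ascending; if exactly one is the groups are non-isomorphic, and if both are the present theorem does not apply. Assuming both are non-ascending, iteratively collapse collapsible edges (detected locally by a $\pm 1$ label on a non-loop edge) to produce reduced labeled graphs $\Gamma_1$ and $\Gamma_2$. By the Forester result cited in the introduction, $G_1\cong G_2$ iff $\Gamma_1$ and $\Gamma_2$ lie in the same canonical deformation space, and under the non-ascending hypothesis this is in turn equivalent to $\Gamma_1$ and $\Gamma_2$ being related by a finite sequence of slide moves through reduced graphs: in Theorem \ref{theorem: 3 moves}, every application of induction or $\mathscr{A}^{\pm 1}$ requires an ascending loop in the intermediate graph, which is unavailable in the non-ascending setting.

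The remaining task is a decision procedure for slide-equivalence of reduced $n$-rose graphs, modulo admissible sign changes and graph isomorphism. Here Proposition \ref{prop: e commute} is the essential tool: it allows any slide sequence between reduced graphs to be rearranged so that all slides of a given mobile edge occur consecutively, while non-mobile edges stay rigid. Consequently the set of reduced graphs reachable from $\Gamma_1$ is parametrized by the slide trajectories of the individual mobile edges, which can be analysed one at a time. For each mobile edge I will argue, using the non-ascending hypothesis and the rank restriction $n\le 3$, that its trajectory visits only finitely many distinct label/incidence configurations; otherwise an iterated slide cycle would, through the commutation, produce a reduced graph exhibiting a strict ascending loop, contradicting non-ascendance. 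Enumerating these finitely many per-edge configurations and reassembling them yields a finite, effectively computable list of reduced graphs reachable from $\Gamma_1$, and one then checks whether $\Gamma_2$ appears on this list up to admissible sign changes and graph automorphism.

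The hard part, which is exactly what Proposition \ref{prop: e commute} is set up to handle, is controlling the interaction between slides of different mobile edges. A single slide replaces a label $m$ by $\ell m$ with $\ell$ the label on the traversed rail, so without isolating edges the orbits could in principle grow through cross-interference between mobile edges. Commutation decouples the edges so that each is subject only to the non-ascending constraint on its own trajectory, and the rank restriction $n\le 3$ makes the finite case analysis of how mobile edges can be routed through the bouquet of loops tractable. Once the effective enumeration of reachable reduced graphs is in hand, the remaining checks, namely testing $\Gamma_2$ against each enumerated graph modulo admissible sign changes and isomorphism of finite labeled graphs, are routine and complete the algorithm.
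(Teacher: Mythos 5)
Your proposal does not prove the statement in question. The statement is Theorem \ref{theorem: 3 moves}, quoted from Clay and Forester's paper on Whitehead moves for $G$-trees: in a deformation space of cocompact $G$-trees, any two reduced trees are related by a finite sequence of slides, inductions, and $\mathscr{A}^{\pm 1}$-moves with all intermediate trees reduced. This is an external structural result about deformation spaces; the present paper cites it without proof, and any genuine proof would have to work at the level of $G$-trees and elementary deformations (collapses and expansions), showing that an arbitrary elementary deformation between reduced trees can be reorganized into the three listed move types while keeping every intermediate tree reduced. Nothing in your text engages with that task.

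What you have written instead is a sketch of the algorithm for Theorem \ref{theorem: main result 3}, the isomorphism problem for non-ascending small-rose GBS groups. Worse, your argument explicitly invokes Theorem \ref{theorem: 3 moves} as an ingredient (to deduce that in the non-ascending setting only slide moves are needed, i.e.\ Corollary \ref{coro: slide sequence}), so as a proof of that theorem it is circular. If your goal was the isomorphism algorithm, the sketch is broadly aligned with the paper's Section 5 in spirit, but it still papers over the key quantitative step: the paper does not enumerate ``all reachable reduced graphs'' for each mobile edge (that set need not be finite); rather it uses the finiteness of $\mbox{S}_{NM}(\Gamma)$ for non-mobile edges, then for each mobile edge solves an explicit divisibility/linear-system problem (via the sets $\Lambda$ built from Dudkin's lemmas and integer programming) to test whether the target label is attainable. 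Your claim that each mobile edge's trajectory ``visits only finitely many distinct label/incidence configurations'' is not justified and is not how the paper closes the argument.
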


From this theorem, we can deduce the following corollary:
\begin{cor}\label{coro: slide sequence}
In a non-ascending deformation space of cocompact $G$-trees, any two reduced trees are related by a finite sequence of slide moves, with all intermediate trees reduced.
\end{cor}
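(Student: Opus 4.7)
The plan is to invoke Theorem \ref{theorem: 3 moves} and then argue that the non-ascending hypothesis rules out every induction move and every $\mathscr{A}^{\pm 1}$-move, leaving slides as the only possible moves in the sequence.

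First I would apply Theorem \ref{theorem: 3 moves} to obtain a finite sequence of slides, inductions, and $\mathscr{A}^{\pm 1}$-moves between the two given reduced trees, with all intermediate trees reduced. Next, inspecting the local pictures of an induction move and of an $\mathscr{A}^{\pm 1}$-move, I would observe that each is performed at a vertex carrying a loop whose two labels are $1$ and $lm$. In the $\mathscr{A}^{\pm 1}$-case the condition $l\neq \pm 1$ is built into the definition, and in the induction case $l=\pm 1$ makes the move either trivial or a mere admissible sign change, so without loss of generality $l\neq \pm 1$ there as well. Since $m\in \mathbb{Z}\setminus\{0\}$ and $|l|\geq 2$, we have $|lm|\geq 2$, so $lm\neq \pm 1$. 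Hence the loop at this vertex is a \emph{strict} ascending loop in the labeled graph of the tree on which the move is performed.

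Because the deformation space is non-ascending, no tree in the space has a labeled graph containing a strict ascending loop; in particular no intermediate tree of the sequence above can admit an induction or an $\mathscr{A}^{\pm 1}$-move. The sequence must therefore consist entirely of slide moves, as required. The only (minor) obstacle is verifying that any non-trivial induction move indeed forces the presence of a strict ascending loop: once $l\neq \pm 1$ is isolated as the condition for non-triviality, strictness follows from $m\neq 0$, and the definition of non-ascending finishes the argument.
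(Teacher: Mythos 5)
Your argument is correct and is exactly the deduction the paper intends; the paper states this corollary without a written proof, as an immediate consequence of Theorem \ref{theorem: 3 moves} together with the observation that inductions and $\mathscr{A}^{\pm 1}$-moves force a strict ascending loop somewhere in the deformation space. One small wording fix: for an $\mathscr{A}^{+1}$-move the loop with labels $1$ and $lm$ appears in the \emph{target} graph rather than the graph on which the move is performed (the source has only the strict virtually ascending loop with labels $k$ and $klm$), but since both graphs lie in the same deformation space the non-ascending hypothesis rules the move out either way.
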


\subsection{The modular homomorphism}
Let $G$ be a non-elementary GBS group with quotient labeled graph $\Gamma$. We define the \textit{modular homomorphism} as the composition $q:G\rightarrow \pi_1^{top}(\Gamma)\rightarrow \mathbb{Q}^{\times}$ where the first map is given by killing the normal closure of elliptic elements and the second map is given by
$$(e_1,...,e_s)\mapsto \prod_{i=1}^s \frac{\lambda(\overline{e}_i)}{\lambda(e_i)}$$
We call $q(g)$ the \textit{modulus} of $g$,
We will mostly use the second map in the latter of this paper. We remark that the second map does not depend on the choice of labeled graph (see Remark 6.5 in \cite{M06}). We say $G$ is \textit{unimodular} if $q(G)\subseteq \{1,-1\}$. 
There are several equivalent definitions of the modular homomorphism; details can be found in \cite{BK90} \cite{M06} and \cite{MM08}.

\subsection{Monotone cycles and mobile edges}

\begin{definition}\cite[Definition 3.2]{MM08}
Let $\Gamma$ be a labeled graph representing $G$ and $e\in E(\Gamma)$. An edge path $(e_1,...,e_s)$ is an \textit{$e$-edge path} if the following holds:
\begin{itemize}[topsep=0pt, itemsep=1ex]
     \item[(a)] $e_i\neq e,\overline{e}$ for $i=1,...,s$;
     \item[(b)] $o(e)=o(e_1)$; and
     \item[(c)] $\lambda(e_i)|\lambda(e)q(e_1,...,e_{i-1})$ for $i=1,...,s$.
\end{itemize}
An $e$-edge path is an \textit{$e$-integer cycle} if, in addition we have:
\begin{itemize}[topsep=0pt, itemsep=1ex]
     \item[(d)] $o(e_1)=t(e_s)$; and
     \item[(e)] $q(e_1,...,e_s)\in \mathbb{Z}$.
\end{itemize}
\end{definition}

We say an $e$-edge path or $e$-integer cycle is \textit{strict} if $|q(e_1,...,e_s)|\neq 1$. Note that an $e$-edge path allows us to slide $e$ along $(e_1,...,e_s)$ and the label of $e$ changes from $\lambda(e)$ to $\lambda(e)q(e_1,...,e_s)$. 

\begin{definition}\cite[Definition 3.4]{MM08}
An edge path $(e_1,...,e_s,e)$ is a \textit{monotone cycle} if $(e_1,...,e_s)$ is an $\overline{e}$-edge path and $q(e_1,...,e_s,e)\in \mathbb{Z}$. A monotone cycle is \textit{strict} if the modulus is not equal to $\pm 1$.

Note that if $s=0$, then the (strict) monotone cycle is either a (strict) ascending loop or a (strict) virtually ascending loop. More generally, when $\Gamma$ has a monotone cycle $\gamma$ with final edge $e$, sliding $\overline{e}$ along the previous edges in the cycle results in a new labeled graph $\Gamma'$ in which $e$ is either a ascending loop or virtually ascending loop with modulus $q(\gamma)$.
\end{definition}

\begin{definition}
Let $G$ be a GBS group and $\mathscr{D}$ the canonical deformation space of $G$.  We denoted by RLG($G$) the set of all reduced labeled graphs in $\mathscr{D}$. In general, RLG($G$) contains infinitely many reduced labeled graphs.
\end{definition}

\begin{definition}\cite[Definition 3.12]{MM08}\label{def: mobile edge}
Let $\Gamma\in$ RLG($G$). An edge $e\in E(\Gamma)$ is \textit{mobile} if either:
\begin{itemize}[topsep=0pt, itemsep=1ex]
     \item[(a)] there is a strict monotone cycle of the form $(e_1,...,e_s,e)$ or $(e_1,...,e_s,\overline{e})$; or
     \item[(b)] $\Gamma$ contains a strict $e$-integer cycle or a strict $\overline{e}$-integer cycle.
\end{itemize}
An edge that is not mobile is called \textit{non-mobile}. Note that mobility is a property of geometric edges: $e$ is mobile if and only if $\overline{e}$ is mobile. In addition, the mobility of an edge is preserved by slide moves; see Corollary 3.15 in \cite{MM08}. We remark that there is an algorithm to determine whether a given edge $e\in E(\Gamma)$ is mobile or not; see Remark 3.13 in \cite{MM08}.
\end{definition}

\begin{remark}\cite[Lemma 3.6]{MM08}\label{remark: non-sacending}
If $\Gamma$ has a strict monotone cycle, then $G$ is ascending. This is because if there exists a strict monotone cycle $\gamma=(e_1,...,e_s,e)$, then we can slide $\overline{e}$ along the path $(e_1,...,e_s)$ turning $e$ into either a strict virtually ascending loop or a strict ascending loop. Therefore, after an $\mathscr{A}$-move in the first case we have $G$ is ascending, which is a contradiction. Hence, for every mobile edge $e$ in a non-ascending deformation space, there exists either a strict $e$-integer cycle or a strict $\overline{e}$-integer cycle.
\end{remark}

\section{When a GBS group is ascending}

\subsection{Existence of strict monotone cycles} Let $G$ be a non-elementary GBS group and $\Gamma\in$ RLG($G$). Suppose $e$ is a mobile edge in $\Gamma$, it is unknown whether there exists a strict monotone cycle with the last edge $e$ or $\overline{e}$. In this subsection, we will work out some methods to find strict monotone cycles in $\Gamma$.

\begin{definition}\cite[Definition 3.9]{MM08}
Given $\Gamma\in$ RLG($G$) and $e\in E(\Gamma)$, we denote by $\mbox{S}(\Gamma,e)\subseteq$ RLG($G$) the set of reduced labeled graphs obtained from $\Gamma$ by a sequence of slides of $e$ and $\overline{e}$. $\mbox{S}(\Gamma,e)$ is then called the \textit{slide space of e based at $\Gamma$}.
\end{definition}

\begin{prop}\label{prop: smc algorithm }
Let $\Gamma$ be a $n$-rose graph. Then there is an algorithm to determine whether $\Gamma$ has a strict monotone cycle or not.
\end{prop}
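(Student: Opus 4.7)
The plan is to reduce the existence of a strict monotone cycle in $\Gamma$ to a family of reachability problems, one per oriented edge of $\Gamma$, and then show each reachability problem is decidable via a bounded search.

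First I would unwind the definitions in the rose setting. Since $\Gamma$ has a single vertex, the endpoint condition $o(e_1) = t(e_s)$ is automatic, and for any oriented edge $e$ the remaining content of the monotone cycle definition is purely arithmetic. Setting $N_0 = \lambda(\overline{e})$ and $N_i = N_{i-1}\lambda(\overline{e}_i)/\lambda(e_i)$ for a candidate sequence $(e_1,\ldots,e_s)$ with $e_i \neq e,\overline{e}$, the $\overline{e}$-edge path condition is exactly that $\lambda(e_i) \mid N_{i-1}$ at every step, which forces each $N_i$ to be an integer. The cycle $(e_1,\ldots,e_s,e)$ is then a strict monotone cycle ending in $e$ iff $\lambda(e) \mid N_s$ and $|N_s/\lambda(e)| \geq 2$. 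So for each of the finitely many oriented edges $e$ the task reduces to the following reachability question: starting from the integer $\lambda(\overline{e})$, by successively multiplying by ratios $\lambda(\overline{f})/\lambda(f)$ for loops $f \neq e$ (each multiplication legal only when the current value is divisible by $\lambda(f)$), can we reach a non-unit integer multiple of $\lambda(e)$?

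To make this reachability decidable I would bound the length of a minimal witness, using $p$-adic valuations. Only the finitely many primes $p$ dividing some label of $\Gamma$ are relevant, and for each such $p$ the sequence $v_p(N_i)$ is a non-negative integer walk whose step at stage $i$ is $v_p(\lambda(\overline{e}_i)) - v_p(\lambda(e_i))$, subject to a lower-bound constraint coming from divisibility. A standard shortening argument gives pairwise distinctness of the $N_i$ in a minimal witness: if $i<j$ and $N_i = N_j$, deleting the subsequence $(e_{i+1},\ldots,e_j)$ preserves all downstream divisibility conditions because the running value at stage $j$ is unchanged, yielding a strictly shorter strict monotone cycle.

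The main obstacle is bounding the magnitudes $|N_i|$ in such a minimal cycle by a computable function of $\Gamma$. I expect the argument to go as follows: for each prime $p$, once $v_p(N_i)$ exceeds $\max\{v_p(\lambda(e)),\,\max_j v_p(\lambda(e_j))\}$, the $p$-part of every subsequent divisibility condition is automatic, so any portion of the walk that climbs above this ceiling and descends can be shortcut by a direct route with the same net $p$-adic effect. Making this pruning rigorous simultaneously across all primes---coupled as they are through a shared sequence of edge choices---is the combinatorial crux, and this is where I expect the substantive work to live. Once a computable bound on $\max_i |N_i|$ is established, only finitely many edge paths need to be enumerated, and each check (divisibility at every step together with the final strict-monotone condition) is routine.
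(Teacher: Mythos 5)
Your reduction is sound and coincides with the paper's first step: in a rose the endpoint conditions are vacuous, a strict monotone cycle ending in $e$ amounts to reaching, from $N_0=\lambda(\overline{e})$ by guarded multiplications by the moduli $q(f)$ of the other loops, an integer $N_s$ with $\lambda(e)\mid N_s$ and $|N_s/\lambda(e)|\geq 2$; and your deletion of repeated values $N_i=N_j$ is exactly the removal of redundant subcycles in Remark~\ref{remark: redundant cycles}. The gap is precisely the step you flag yourself: you never establish the computable bound on $\max_i|N_i|$ along a minimal witness, and the pruning heuristic you offer does not obviously survive the coupling between primes. A segment of the walk that climbs above the ceiling for one prime $p$ and descends cannot simply be replaced by ``a direct route with the same net $p$-adic effect'': any replacement is itself a sequence of edges, so it perturbs the valuations at \emph{every} prime simultaneously, and it must satisfy the divisibility guards at every intermediate step for every prime at once. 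Nothing in the sketch produces such a replacement, so the decidability of your reachability problem --- which is the entire content of the proposition --- is not established.

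For comparison, the paper does not bound witness lengths at all. It encodes each label by its exponent vector over the finitely many relevant primes, observes that the target condition is a system of linear inequalities $\Pi(e)$ in the net exponents $(x_2,\dots,x_n)$ of the loops traversed, and imports from Dudkin \cite{FA17} (Lemmas 4, 5 and Theorem 3 there) a structural description of the set $\Lambda(e)$ of exponent vectors actually realizable by slide sequences, as a computable finite union of polyhedral pieces; deciding whether $\Pi(e)\cap\Lambda(e)$ contains a lattice point is then integer linear programming, handled by Lenstra's algorithm \cite{L83}. To close your gap you would either have to reprove something like Dudkin's description of the reachable set, or recognize your guarded-multiplication system as a vector addition system coverability problem (the guard ``divisible by $\lambda(f)$'' is a lower-bound test on the valuation vector, and the target is upward-closed) and invoke known bounds on minimal covering runs. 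Either way, the missing ingredient is a genuine theorem, not a routine pruning.
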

\begin{proof}
We first consider the case $n=1$. Then the first Betti number of $\Gamma$ is 1. By Remark 4.1 in \cite{MM08}, we can determine algorithmically whether $\Gamma$ contains a strict monotone cycle. 

Now we provide a proof for the general case $n\geq 2$. In this case, $\Gamma$ is a wedge sum of $n$ oriented loops $g_1,...,g_n$. First, we can check whether the set $\{q(g_1),q(\overline{g}_1),..., q(g_n),q(\overline{g}_n)\}$ contains a non-trivial integer. An affirmative answer implies that $\Gamma$ contains either a strict ascending loop or a strict virtually ascending loop, which is a strict monotone cycle. Otherwise, given a mobile edge $e$, we notice that $e\in \{g_1,\overline{g}_1,...,g_n,\overline{g}_n\}$. Without loss of generality, we may assume that $e=g_1$. If $\Gamma$ has a strict monotone cycle $\gamma=(e_1,...,e_s,e)$, we have $e_t\in \{g_2,\overline{g}_2,...,g_n,\overline{g}_n\}$ for all $t=1,...,s$. By definition, there exists a $(n-1)$-tuple of integers $(x_2,...,x_n)$ such that $\lambda(e)|\lambda(\overline{e})\prod_{h=2}^nq(g_h)^{x_h}$ and $\lambda(e)\neq \pm \lambda(\overline{e})\prod_{h=2}^nq(g_h)^{x_h}$.

We now denote all different prime factors of $q(g_1),..., q(g_n)$ by $p_1,...,p_r$. Then every label in $\Gamma$ has the form $p_1^{\epsilon_1}\cdot\cdot\cdot p_r^{\epsilon_r}$ with each $\epsilon_i\in \mathbb{Z}$. Therefore we can associate a $r$-tuple $(\epsilon_1,...,\epsilon_r)$ for every label in $\Gamma$. We then denoted by $(\sigma_1,...,\sigma_r)$ (resp. $(\sigma_1',...,\sigma_r')$) the $r$-tuple of $\lambda(e)$ (resp. $\lambda(\overline{e})$), and we denoted by $(\alpha^h_1,...,\alpha^h_r)$ the $r$-tuple of $q(g_h)$ for $h=2,...,n$. Then the $(n-1)$-tuple of integers $(x_2,...,x_n)$ above gives us an integer solution of the system of inequalities $\sigma_l\leq \sigma_l'+\sum_{h=2}^n x_h\alpha_l^h$ for all $l=1,...,r$. In addition, there must exist one $l$ such that $\sigma_l< \sigma_l'+\sum_{h=2}^n x_h\alpha_l^h$ because we require that $\lambda(e)\neq \lambda(\overline{e})\prod_{h=2}^nq(g_h)^{x_h}$. We then define 
$$\Pi(e)=\Bigl\{(x_2,...,x_n)\ |\  \sigma_l\leq \sigma_l'+\sum_{h=2}^n x_h\alpha_l^h\ \mbox{for all}\  l=1,...,r \ \mbox{except one with strict less than} \Bigl\}$$
Thus each element in $\Pi(e)$ corresponds to a candidate of strict monotone cycle with the last edge $e$. In addition, each element in $\Pi(e)$  gives us a candidate of label of $\overline{e}$ after sliding over a strict monotone cycle with the last edge $e$, i.e. $\lambda(\overline{e})\prod_{h=2}^nq(g_h)^{x_h}$.

We notice that, by Lemma 4, Lemma 5 and Theorem 3 in \cite{FA17}, we can find a set of $(n-1)$-tuple of integers
$$\Lambda(e)=\bigcup_{i=1}^L\Bigl\{(y_2,...,y_n)\ |\  w^i_l\leq \sigma_l'+\sum_{h=2}^n y_h\alpha_l^h\ \mbox{for all}\ l=1,...,r\Bigl\}$$
such that for each element $(y_2,...,y_n)$, the number $\lambda(\overline{e})\prod_{h=2}^nq(g_h)^{y_h}$ is a possible label that can appear on $\overline{e}$ after a slide move of $\overline{e}$ over an edge cycle, where $L$ is finite and $w^i_l$ is a computable non-negative integer for each $i=1,...,L$. Since there is only one vertex in $\Gamma$, then every slide move of $\overline{e}$ is over a cycle in $\Gamma$. It follows that $\Lambda(e)$ corresponds to a set of all possible labels of $\overline{e}$ as a consequence of slide moves of $\overline{e}$. Thus we can determine whether $\Gamma$ contains a strict monotone cycle with the last edge $e$ by checking that whether the set $\Pi(e)\cap \Lambda(e)$ is empty. We note that the set $\Pi(e)\cap \Lambda(e)$ corresponds to lattice points in a polyhedron (possibly unbounded) in $\mathbb{R}^{n-1}$. Therefore it is suffices to detect whether there exists a lattice point in such polyhedron. This is the integer linear programming problem and it can be done by the algorithm in section 4 in \cite{L83}.

Finally, by repeating the above process for $\overline{e}$ and other mobile edges in $\Gamma$, we can determine whether $\Gamma$ contains strict monotone cycle or not.
\end{proof}

\subsection{Slide moves of strict monotone cycles}
Given a non-elementary GBS group $G$ and $\Gamma\in$ RLG($G$), suppose $\Gamma$ has a strict monotone cycle $\gamma$. We denote the last edge of $\gamma$ by $e$. By Corollary 3.15 in \cite{MM08}, we know that slide moves cannot change the mobility of the edge $e$. However it is unknown when slide moves will preserve strict monotone cycles. In this subsection, we will study slide moves of edges in $E(\gamma)$ to see how they affect $\gamma$. To simplify our discussion, let $G^{+}$ be the GBS group represented by the labeled graph $(\Gamma, |\lambda|)$ and we only consider slide moves in $\mbox{RLG}$($G^{+}$).

\begin{notation}
Given $\Gamma\in$ RLG($G$), $e\in E(\Gamma)$ and $A$ an $e$-edge path, we will use the notation $e/A$ to denote the slide move of $e$ over $A$.
\end{notation}

\begin{definition}
Given $\Gamma\in$ RLG($G$) and $e\in E(\Gamma)$, suppose $A=(e_1,...,e_s)$ is an $e$-edge path in $\Gamma$. We call $\sigma=(e_i,...,e_j)$ a \textit{redundant subcycle} of $A$ if $o(e_i)=t(e_j)$ and $q(\sigma)=1$ for some $1\leq i\leq j\leq s$.  
\end{definition}

\begin{remark}\label{remark: redundant cycles}
Note that if $A$ has a redundant subcycle $\sigma=(e_i,...,e_j)$, one can check the complement $A\setminus \sigma=(e_1,...,e_{i-1},e_{j+1},...,e_s)$ is also an $e$-edge path. In addition, we have $e/A=e/(A\setminus \sigma)$, i.e. $e/A$ and $e/(A\setminus \sigma)$ will result in same labeled graph. Therefore, whenever we speak of an $e$-edge path $A$, we will always refer to the path that has no redundant subcycles. Moreover, whenever we speak of a strict monotone cycle $\gamma=(e_1,...,e_s,e)$, we will always assume that the subpath $(e_1,...,e_s)$ has no redundant subcycles.
\end{remark}

\begin{lemma}\label{lemma: slide over non-mobile edge}
Suppose $\Gamma\in$ RLG($G^+$) and $\gamma=(e_1,...,e_s,e)$ is a strict monotone cycle in $\Gamma$ for some $s\in \mathbb{Z}_{\geq 0}$. Further, suppose there is a $\Gamma'\in$ RLG($G^+$) such that $\Gamma'$ can be obtained from $\Gamma$ by either $e/f,\overline{e}/f,e_i/f$ or $\overline{e}_i/f$ where $f\not\in\{e,\overline{e}\} $. Then $\Gamma'$ contains a strict monotone cycle.
\end{lemma}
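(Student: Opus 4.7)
My approach is to handle each of the four slide types separately, in each case producing an explicit strict monotone cycle in $\Gamma'$ by inserting $f$ or $\overline{f}$ into $\gamma$ at the site of the slide. The natural candidates are $(e_1,\ldots,e_s,f,e)$ for the slide $e/f$, $(\overline{f},e_1,\ldots,e_s,e)$ for $\overline{e}/f$, $(e_1,\ldots,e_{i-1},f,e_i,\ldots,e_s,e)$ for $e_i/f$, and $(e_1,\ldots,e_i,\overline{f},e_{i+1},\ldots,e_s,e)$ for $\overline{e}_i/f$. The intuition is simple: a slide $g/f$ relocates the origin of $g$ from $o(f)$ to $t(f)$, so inserting $f$ (or $\overline{f}$) at the corresponding position in $\gamma$ precisely retraces this displacement and makes the cycle close up in $\Gamma'$.

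\textbf{Verification steps.} For each candidate I would first check that consecutive edges remain composable in $\Gamma'$, which is immediate once one tracks exactly which endpoint of the slid edge has been moved. Next I would verify conditions (a)--(c) of the $\overline{e}$-edge path definition. Condition (a) follows from $f\notin\{e,\overline{e}\}$, and condition (b) either holds trivially or is arranged exactly by the prepended $\overline{f}$ in the $\overline{e}/f$ case. For (c) the key observation is that, computed with the new labels, the truncated modulus through the first $k$ edges of the new cycle coincides with the truncated modulus through $(e_1,\ldots,e_k)$ in the old graph: the factor $\lambda(\overline{f})/\lambda(f)$ contributed by the inserted $f$ cancels the change $\lambda'(g)=\lambda(g)\cdot\lambda(\overline{f})/\lambda(f)$ of the slid edge. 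Consequently all the old divisibility inequalities transport directly to the new graph, and the same cancellation gives $q(\gamma')=q(\gamma)$, so $\gamma'$ is still an integer cycle with $|q(\gamma')|\neq 1$.

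\textbf{Main obstacle.} The most delicate case is $\overline{e}_i/f$, where the inserted $\overline{f}$ lies strictly inside the $\overline{e}$-edge path, and one must show that $\lambda(\overline{f})\mid\lambda(\overline{e})\cdot q(e_1,\ldots,e_i)$ computed with the new labels. This does not follow from the slide hypothesis $\lambda(f)\mid\lambda(\overline{e}_i)$ alone; one must combine it with the old divisibility $\lambda(e_i)\mid\lambda(\overline{e})q(e_1,\ldots,e_{i-1})$ coming from $\gamma$ itself, by observing that the integer $\lambda(\overline{e})q(e_1,\ldots,e_{i-1})/\lambda(e_i)$, when multiplied by $\lambda(\overline{e}_i)$, is still divisible by $\lambda(f)$. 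Finally, should any candidate cycle accidentally acquire a redundant subcycle (for instance, if $f$ coincides with some $\overline{e}_{j}$ adjacent to the insertion site), Remark~\ref{remark: redundant cycles} allows us to delete it without disturbing either the $\overline{e}$-edge path property or the modulus, yielding the desired strict monotone cycle in $\Gamma'$.
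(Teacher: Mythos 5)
Your overall strategy is the same as the paper's: insert $f$ or $\overline{f}$ into $\gamma$ at the site of the slide and check that the divisibility conditions transport. Your candidates for $e/f$ and $\overline{e}/f$ are exactly the paper's, and these cases are safe because condition (a) of the edge-path definition guarantees that $e,\overline{e}$ do not occur among $e_1,\dots,e_s$, so the slide displaces only the single occurrence of $e$ at the end of the cycle. Your bookkeeping for condition (c) — the cancellation of $\lambda(\overline{f})/\lambda(f)$ against the change of label of the slid edge, and the combination of $\lambda(f)\mid\lambda(\overline{e}_i)$ with $\lambda(e_i)\mid\lambda(\overline{e})q(e_1,\dots,e_{i-1})$ in the $\overline{e}_i/f$ case — is also correct as far as it goes.

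There is, however, a genuine gap in the cases $e_i/f$ and $\overline{e}_i/f$: you insert $f$ (resp.\ $\overline{f}$) at a \emph{single} position, which tacitly assumes that the geometric edge underlying $e_i$ occurs exactly once in $\gamma$. A strict monotone cycle need not be embedded, and the convention of Remark~\ref{remark: redundant cycles} only forbids subcycles of modulus $1$, not repeated edges; so $e_i$ and $\overline{e}_i$ may each occur several times among $e_1,\dots,e_s$. The slide displaces the vertex $o(e_i)=t(\overline{e}_i)$, and therefore breaks the cycle at \emph{every} occurrence of the geometric edge, not just at position $i$. With only one insertion your candidate is not even an edge path in $\Gamma'$: for instance if $\gamma=(a,b,a,c,e)$ and one performs $a/f$, then after traversing $b$ one arrives at the old $o(a)$ rather than the new one, so $(f,a,b,a,c,e)$ does not close up at the second $a$. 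The paper repairs this with a ``surgery'' carried out at every occurrence — for $e_i/f$, each occurrence of $e_i$ is replaced by $(f,e_i)$ and each occurrence of $\overline{e}_i$ by $(\overline{e}_i,\overline{f})$, with the mirror-image replacements for $\overline{e}_i/f$ — after which your divisibility computation goes through occurrence by occurrence, since the inserted factors $q(f)$ and $q(\overline{f})$ cancel the label changes in pairs and leave the total modulus equal to $q(\gamma)$. Your closing remark about deleting accidental redundant subcycles is fine but does not address this issue.
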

\begin{proof}
For $e/f$, since $e$ can slide over $f$ in $\Gamma$, then $\lambda(f)|\lambda(e)$. In addition, we have $\lambda'(e)=\lambda(e)q(f)$. Since $\gamma$ is a strict monotone cycle in $\Gamma$, then $\overline{e}$ can slide over $(e_1,...,e_s)$ and the resulting label is $\lambda(\overline{e})q(e_1,...,e_s)$. Moreover, we have $\lambda(e)|\lambda(\overline{e})q(e_1,...,e_s)$. Thus we have $\lambda(f)|\lambda(\overline{e})q(e_1,...,e_s)$. This means that $\overline{e}$ can slide over the edge path $(e_1,...,e_s,f)$ in $\Gamma'$ and the resulting label of $\overline{e}$ after sliding over it is $\lambda(\overline{e})q(e_1,...,e_s)q(f)$. In addition, we have $\lambda(e)q(f)|\lambda(\overline{e})q(e_1,...,e_s)q(f)$. This implies that $(e_1,...,e_s,f,e)$ is a strict monotone cycle in $\Gamma'$.

For $\overline{e}/f$, since $\overline{e}$ can slide over $f$ in $\Gamma$, then the new $\overline{e}$ in $\Gamma'$ can slide over $\overline{f}$ back to its original position, and the resulting label is the same as the original one, which is $\lambda(\overline{e})$. Thus the edge cycle $(\overline{f},e_1,...,e_s,e)$ in $\Gamma'$ is a strict monotone cycle.

For $e_i/f$, we first notice that $\gamma$ need not be an embedded edge cycle, so $e_i$ may appear more than once in $\gamma$. To reduce subscript use, we denote the edge $e_i$ by $g$, so we can rewrite $\gamma$ as
$$\gamma=(e_1,...,e_{i_1-1},e_{i_1},e_{i_1+1}...,e_{i_k-1},e_{i_k},e_{i_k+1},...,e_s,e)$$
where each $e_{i_j}\in \{g,\overline{g}\}$ and $e_r\not\in \{g,\overline{g}\}$ for all $j=1,...,k$ and $r\neq i_j$. Now we will establish a \enquote{surgery} to find a strict monotone cycle in $\Gamma'$. The procedure is as follows. For the strict monotone cycle $\gamma$,
\begin{itemize}[topsep=0pt, itemsep=1ex]
\item[-] If $e_{i_j}=g$, then replace $e_{i_j}$ in $\gamma$ by the edge path $(f,e_{i_j})$.
\item[-] If $e_{i_j}=\overline{g}$, then replace $e_{i_j}$ in $\gamma$ by the edge path $(e_{i_j},\overline{f})$.
\end{itemize}
We denoted by $\gamma'$ the resulting edge cycle obtained from $\gamma$ by the above surgery. One can check that $\gamma'$ is a strict monotone cycle in $\Gamma'$.

For $\overline{e}_i/f$, similarly, we will provide another surgery of $\gamma$ to find a strict monotone cycle in $\Gamma'$. For the strict monotone cycle $\gamma$, recall that $g=e_i$, so
\begin{itemize}[topsep=0pt, itemsep=1ex]
\item[-] If $e_{i_j}=g$, then replace $e_{i_j}$ in $\gamma$ by the edge path $(e_{i_j},\overline{f})$.
\item[-] If $e_{i_j}=\overline{g}$, then replace $e_{i_j}$ in $\gamma$ by the edge path $(f,e_{i_j})$.
\end{itemize}
One can check the resulting edge cycle $\gamma'$ after the above surgery of $\gamma$ is a strict monotone cycle in $\Gamma'$.
\end{proof}

\begin{lemma}\label{lemma: n rose slide over mobile edge}
Suppose $\Gamma\in$ RLG($G^+$) is a $n$-rose graph and $\gamma=(e_1,...,e_s,e)$ is a strict monotone cycle in $\Gamma$ for some $s\in \mathbb{Z}_{\geq0}$. Further, suppose there is a $\Gamma'\in$ RLG($G^+$) such that $\Gamma'$ can be obtained from $\Gamma$ by either $e_i/e$ or $\overline{e}_i/e$, then $\Gamma'$ contains a strict monotone cycle.
\end{lemma}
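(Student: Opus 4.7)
My plan is to construct an explicit strict monotone cycle in $\Gamma'$ by case analysis on $q(e)$, paralleling the surgery in the preceding lemma but adapting to the fact that the slide is now over the last edge $e$ of $\gamma$, so $e$ and $\overline{e}$ cannot be inserted into the interior of a monotone cycle with last edge $e$. I first note that the slide $e_i/e$ fixes every label except $\lambda(e_i)$, which becomes $\lambda(e_i)q(e)$; in particular $\lambda(e)$, $\lambda(\overline{e})$, $\lambda(\overline{e_i})$, and all other labels are unchanged.

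If $q(e)$ is an integer with $|q(e)|>1$, then $(e)$ is a strict virtually ascending loop in both $\Gamma$ and $\Gamma'$, so the length-zero cycle $(e)$ is a strict monotone cycle in $\Gamma'$. A symmetric argument handles $q(\overline{e})\in\mathbb{Z}$ with $|q(\overline{e})|>1$, using $(\overline{e})$. If $q(e)=\pm 1$, the slide only changes $\lambda(e_i)$ by a sign, divisibility is insensitive to signs, and $|q(\gamma)|$ is unchanged, so $\gamma$ itself remains a strict monotone cycle in $\Gamma'$.

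The remaining case is when $q(e)$ and $q(\overline{e})$ are both non-integer and $q(e)\neq\pm 1$. Here I observe that $i\geq 2$, since the conditions $\lambda(e)\mid\lambda(e_1)$ (from slide validity) and $\lambda(e_1)\mid\lambda(\overline{e})$ (from $\gamma$) would otherwise force $\lambda(e)\mid\lambda(\overline{e})$, contradicting $q(e)\notin\mathbb{Z}$. I then consider the cyclic shift $\gamma^{\sharp}=(e_{i+1},\ldots,e_s,e,e_1,\ldots,e_{i-1},e_i)$ of $\gamma$, where $e_i$ becomes the new last edge; since $e\neq e_i,\overline{e_i}$, it is legal in the interior of $\gamma^{\sharp}$. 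I verify that $\gamma^{\sharp}$, possibly augmented by a prepended copy of $e$ to correct the modulus shift induced by the slide, is a strict monotone cycle in $\Gamma'$, exploiting that in a rose graph all edges meet at a single vertex and so no combinatorial obstruction arises.

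The case $\overline{e_i}/e$ is handled by the symmetric argument with the orientation of $e_i$ reversed. The main obstacle is verifying the $\overline{e_i}$-edge path condition for $\gamma^{\sharp}$ in $\Gamma'$: the original divisibility conditions of $\gamma$ involve $\lambda(\overline{e})$, whereas $\gamma^{\sharp}$ requires divisibility by $\lambda(\overline{e_i})$, and if $e_i$ or $\overline{e_i}$ appears multiple times in $\gamma$ the surgery must be further refined to avoid placing $e_i$ or $\overline{e_i}$ in the interior. This verification draws on the integer modulus $k=q(\gamma)$, the slide relation $\lambda'(e_i)=\lambda(e_i)q(e)$, and careful tracking of the positions and signs of $e_i$ in $\gamma$.
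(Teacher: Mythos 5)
Your opening reductions are fine: if $q(e)\in\mathbb{Z}$ with $|q(e)|>1$ then $(e)$ is itself a strict monotone cycle of length zero surviving the slide, the case $q(\overline{e})\in\mathbb{Z}$ is symmetric, and $q(e)=1$ makes the slide trivial in RLG($G^{+}$). But the remaining case, which is the entire content of the lemma, is not proved. You assert that the cyclic shift $\gamma^{\sharp}=(e_{i+1},\ldots,e_s,e,e_1,\ldots,e_{i-1},e_i)$, possibly with a prepended $e$, is a strict monotone cycle in $\Gamma'$, but the required divisibilities do not follow from the hypotheses, and your own closing paragraph concedes that the $\overline{e_i}$-edge path verification and the multiple-occurrence issue (condition (a) of Definition 2.5 forbids $e_i,\overline{e_i}$ from the interior of $\gamma^{\sharp}$, which is violated whenever $e_i$ or $\overline{e_i}$ occurs more than once in $\gamma$) are unresolved. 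In fact the construction fails outright. Take the $3$-rose with $\lambda(e)=4$, $\lambda(\overline{e})=6$, $\lambda(e_1)=6$, $\lambda(\overline{e}_1)=8$, $\lambda(e_2)=4$, $\lambda(\overline{e}_2)=6$. Then $\gamma=(e_1,e_2,e)$ is a strict monotone cycle ($6\mid 6$, $4\mid 6\cdot\tfrac{8}{6}=8$, $q(\gamma)=\tfrac{8}{6}\cdot\tfrac{6}{4}\cdot\tfrac{6}{4}=3$), both $q(e)=\tfrac32$ and $q(\overline{e})=\tfrac23$ are non-integers, and the slide $e_2/e$ is valid, giving $\lambda'(e_2)=6$. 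Your $\gamma^{\sharp}=(e,e_1,e_2)$ requires $\lambda'(e)\mid\lambda'(\overline{e}_2)$, i.e.\ $4\mid 6$, which is false, and prepending $e$ does not change this first divisibility. The cycle that actually works here is $(e_1,e)$, with $6\mid 6$ and $q'(e_1,e)=2$.

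This illustrates why the correct argument cannot be a single uniform surgery: the replacement cycle depends on whether $\lambda(e)$ \emph{properly} divides $\lambda(\overline{e})q(e_1,\ldots,e_{i_1-1})$ (in which case one short-circuits to $(e_1,\ldots,e_{i_1-1},e)$, a cycle avoiding $e_i$ entirely) or equals it, on whether the first occurrence of the slid geometric edge in $\gamma$ is $e_i$ or $\overline{e_i}$, and on whether that edge occurs once or several times (in which case one truncates at the second occurrence and the construction again bifurcates on its orientation). The paper's proof is precisely this case division, and the case $\overline{e}_i/e$ requires its own treatment rather than a formal symmetry. To repair your argument you would need to replace $\gamma^{\sharp}$ by the case-dependent cycles and verify each divisibility chain, at which point you would have reproduced the paper's proof.
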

\begin{proof}
First, if $s=0$ or if $n=1$, then there is no edge $e_i$ to slide over $e$. Therefore $\Gamma=\Gamma'$ and $e$ is a strict monotone cycle of one single edge in $\Gamma'$. 

Next we consider the general case where $n>1$, then $\Gamma$ is a wedge sum of $n$ oriented loops. We denote $e_i$ by $f$. We first notice that $\gamma$ need not be an embedded edge cycle, so $f$ may appear more than once in $\gamma$. Thus we can rewrite $\gamma$ as
$$\gamma=(e_1,...,e_{i_1-1},e_{i_1},e_{i_1+1},...,e_{i_k-1},e_{i_k},e_{i_k+1},...,e_s,e)$$
where each $e_{i_j}\in \{f,\overline{f}\}$ and $e_r\not\in \{f,\overline{f}\}$ for all $j=1,...,k$ and $r\neq i_j$. We remark that if $n=2$, then there is no such $e_r$, i.e. $\gamma=(f,...,f,e)$ or $(\overline{f},...,\overline{f},e)$ by Remark \ref{remark: redundant cycles}.  In addition, since we are considering the case $e_i=f$, so there must exist a $j$ such that $e_{i_j}=f$.  For simplicity, we will use $\lambda'$ to denote labels in $\Gamma'$.

For $e_i/e$, we have $\lambda(e)|\lambda(f)$. Suppose that $e_{i_1}=f$. Since $\gamma$ is a strict monotone cycle, then $\lambda(e_p)|\lambda(\overline{e})q(e_1,...,e_{p-1})$ for $p=1,...,s$. In particular, we have $\lambda(f)|\lambda(\overline{e}) q(e_1,...,e_{i_1-1})$. It follows that $\lambda(e)|\lambda(\overline{e})q(e_1,...,e_{i_1-1})$. Then we have the following cases:
\begin{itemize}[topsep=0pt, itemsep=1ex]
\item[-] If $\lambda(e)$ properly divides $\lambda(\overline{e})q(e_1,...,e_{i_1-1})$, then we have $(e_1,...,e_{i_1-1},e)$ is a strict monotone cycle in $\Gamma'$.
\item[-] Otherwise $\lambda(e)=\lambda(f)=\lambda(\overline{e})q(e_1,...,e_{i_1-1})$. Since $\lambda(e_p)|\lambda(\overline{e})q(e_1,...,e_{p-1})$ for $p=1,...,s$, then we have $$\lambda(e_p)|\lambda(\overline{e})q(e_1,...,e_{i_1-1})\frac{\lambda(\overline{f})}{\lambda(f)}q(e_{i_1+1},...,e_{p-1})\ \Rightarrow \lambda(e_p)|\lambda(\overline{f})q(e_{i_1+1},...,e_{p-1})$$ for $p=i_1+1,...,s$.
\begin{itemize}[topsep=0pt, itemsep=1ex]
\item[-] If $k=1$, we claim that $(e_{i_1+1},...,e_s,e,f)$ is a strict monotone cycle in $\Gamma'$. We notice that the above division implies that $(e_{i_1+1},...,e_s)$ is a $\overline{f}$-edge path in $\Gamma'$. Moreover, the resulting label of $\overline{f}$ after sliding over it is $\lambda(\overline{f})q(e_{i_1+1},...,e_s)=\lambda(\overline{e})q(e_1,...,e_s)$. Recall that $\gamma$ is a strict monotone cycle, so $\lambda(e)$ properly divides $\lambda(\overline{e})q(e_1,...,e_s)$. This implies that $(e_{i_1+1},...,e_s,e)$ is a $\overline{f}$-edge path in $\Gamma'$ and the resulting label of $\overline{f}$ after sliding over it is $\lambda(\overline{e})q(\gamma)$. Since $\lambda'(f)= \frac{\lambda(f)}{\lambda(e)}\lambda(\overline{e})=\lambda(\overline{e})$, then we have $\lambda'(f)|q(\gamma)\lambda(\overline{e})$. This shows that $(e_{i_1+1},...,e_s,e,f)$ is a strict monotone cycle in $\Gamma'$.
\item[-] If $k>1$, then there exists $e_{i_2}\in \{f,\overline{f}\}$ in the edge subpath $(e_{i_1+1},...,e_s)$.

If $e_{i_2}=f$, we claim that $(e_{i_1+1},...,e_{i_2-1},e,f)$ is a strict monotone cycle in $\Gamma'$. We notice that the above division implies $(e_{i_1+1},...,e_{i_2-1})$ is a $\overline{f}$-edge path in $\Gamma'$. Moreover, the resulting label of $\overline{f}$ after sliding over it is $\lambda(\overline{f})q(e_{i_1+1},...,e_{i_2-1})$. Since $\gamma$ is a strict monotone cycle, then we have $\lambda(f)|\lambda(\overline{f})q(e_{i_1+1},...,e_{i_2-1})$, i.e. $\lambda(f)h=\lambda(\overline{f})q(e_{i_1+1},...,e_{i_2-1})$. By Remark \ref{remark: redundant cycles}, we have $h\neq 1$, otherwise $(e_{i_1+1},...,e_{i_2})$ is a redundant subcycle of $\gamma$. Since $\lambda(e)=\lambda(f)$, we have $\lambda(e)h=\lambda(\overline{f})q(e_{i_1+1},...,e_{i_2-1})$. This implies that $(e_{i_1+1},...,e_{i_2-1},e)$ is a $\overline{f}$-path and the resulting label of $\overline{f}$ after sliding over it is $\lambda(\overline{e})h$. Since $\lambda'(f)=\lambda(\overline{e})$, we have $\lambda'(f)|\lambda(\overline{e})h$. This shows that $(e_{i_1+1},...,e_{i_2-1},e,f)$ is a strict monotone cycle in $\Gamma'$.

On the other hand, if $e_{i_2}=\overline{f}$, we claim $(f,e_{i_1+1},...,e_{i_2-1},\overline{f},e_1,...,e_{i_1-1},e)$ is a strict monotone cycle in $\Gamma'$. We notice that the above division implies $\lambda(\overline{f})|\lambda(\overline{f})q(e_{i_1+1},...,e_{i_2-1})$, i.e. $q(e_{i_1+1},...,e_{i_2-1})\in \mathbb{Z}$. By Remark \ref{remark: redundant cycles}, we have $q(e_{i_1+1},...,e_{i_2-1})\neq 1$. This can not happen if $n=2$. Thus it suffices to consider $n>2$. We first notice that $\lambda'(\overline{e})=\lambda(\overline{e})=\lambda'(f)$, so $\overline{e}$ can slide over $f$ in $\Gamma'$ and the resulting label of $\overline{e}$ is $\lambda(\overline{f})$. By the above division, we have $(f,e_{i_1+1},...,e_{i_2-1})$ is an $\overline{e}$-edge path and the resulting label of $\overline{e}$ after sliding over it is $\lambda(\overline{f})q(e_{i_1+1},...,e_{i_2-1})$. Since $\lambda(\overline{f})|\lambda(\overline{f})q(e_{i_1+1},...,e_{i_2-1})$, then $(f,e_{i_1+1},...,e_{i_2-1},\overline{f})$ is an $\overline{e}$-edge path and the resulting label of $\overline{e}$ is $\lambda(\overline{e})q(e_{i_1+1},...,e_{i_2-1})$. Recall that $\lambda(e_p)|\lambda(\overline{e})q(e_1,...,e_{p-1})$ for $p=1,...,s$, then $\lambda(e_p)|(\lambda(\overline{e})q(e_{i_1+1},...,e_{i_2-1}))q(e_1,...,e_{p-1})$ for $p=1,...,i_1-1$. Therefore $(f,e_{i_1+1},...,e_{i_2-1},\overline{f},e_1,...,e_{i_1-1})$ is an $\overline{e}$-edge path and the resulting label of $\overline{e}$ is $\lambda(\overline{e})q(e_{i_1+1},...,e_{i_2-1})q(e_1,...,e_{i_1-1})=\lambda(e)q(e_{i_1+1},...,e_{i_2-1})$. Since $\lambda'(e)=\lambda(e)$ and $\lambda(e)$ properly divides $\lambda(e)q(e_{i_1+1},...,e_{i_2-1})$, then we have $(f,e_{i_1+1},...,e_{i_2-1},\overline{f},e_1,...,e_{i_1-1},e)$ is a strict monotone cycle in $\Gamma'$.
\end{itemize}
\end{itemize}
On the other hand, suppose that $e_{i_1}=\overline{f}$. We first notice that, by Remark \ref{remark: redundant cycles}, this subcase can not happen if $n=2$. So it suffices to consider $n>2$. Since $\gamma$ is a strict monotone cycle, we have $\lambda(\overline{f})|\lambda(\overline{e})q(e_1,...,e_{i_1-1})$. Recall that $\lambda'(f)=\frac{\lambda(f)}{\lambda(e)}\lambda(\overline{e})$. This implies that $\lambda(\overline{f})|\lambda'(f)q(e_1,...,e_{i_1-1})$. Since there exists $j$ such that $e_{i_j}=f$, then we must have $k>1$, i.e. there exists $e_{i_2}\in \{f,\overline{f}\}$ in the edge subpath $(e_{i_1+1},...,e_s)$.
\begin{itemize}[topsep=0pt, itemsep=1ex]
\item[-] If $\lambda(\overline{f})$ properly divides $\lambda'(f)q(e_1,...,e_{i_1-1})$, then we have $(e_1,...,e_{i_1-1},\overline{f})$ is a strict monotone cycle in $\Gamma'$.
\item[-] Otherwise we have $\lambda(\overline{f})=\lambda'(f)q(e_1,...,e_{i_1-1})$. This implies that $\lambda(f)=\lambda(e)$, $\lambda'(f)=\lambda(\overline{e})$ and $\lambda(\overline{f})=\lambda(\overline{e})q(e_1,...,e_{i_1-1})$. In addition, we have $$\lambda(\overline{e})q(e_1,...,e_{i_1})=\lambda(\overline{e})q(e_1,...,e_{i_1-1})\frac{\lambda(f)}{\lambda(\overline{f})}=\lambda(f)$$
If $e_{i_2}=f$, we claim that $(\overline{e},e_{i_1+1},...,e_{i_2-1},e,e_1,...,e_{i_1-1},\overline{f})$ is a strict monotone cycle in $\Gamma'$. Since $\gamma$ is a strict monotone cycle, then $\lambda(f)|\lambda(\overline{e})q(e_1,...,e_{i_2-1})$. It follows that $\lambda(f)|\lambda(f)q(e_{i_1+1},...,e_{i_2-1})$, i.e. $q(e_{i_1+1},...,e_{i_2-1})\in \mathbb{Z}$. By Remark \ref{remark: redundant cycles}, we have $q(e_{i_1+1},...,e_{i_2-1})\neq 1$, otherwise the edge cycle $(e_{i_1+1},...,e_{i_2-1})$ is a redundant subcycle of $\gamma$.  We notice that $\lambda'(\overline{e})=\lambda'(f)=\lambda(\overline{e})$, so $f$ can slide over $\overline{e}$ in $\Gamma'$ and the resulting label is $\lambda(e)=\lambda(f)$. Since $\lambda(e_p)|\lambda(\overline{e})q(e_1,...,e_{p-1})$ for $p=1,...,s$, then $\lambda(e_p)|\lambda(f)q(e_{i_1+1},...,e_{p-1})$ for $p=i_1+1,...,i_2$. This implies that $(\overline{e},e_{i_1+1},...,e_{i_2-1})$ is a $f$-edge path and the resulting label of $f$ after sliding over it is $\lambda(f)q(e_{i_1+1},...,e_{i_2-1})$. Since $\lambda(e)|\lambda(f)q(e_{i_1+1},...,e_{i_2-1})$, then $(\overline{e},e_{i_1+1},...,e_{i_2-1},e)$ is a $f$-edge path and the resulting label of $f$ after sliding over it is $\lambda(\overline{e})q(e_{i_1+1},...,e_{i_2-1})$. Recall that $\lambda(e_p)|\lambda(\overline{e})q(e_1,...,e_{p-1})$ for $p=1,...,s$, then $\lambda(e_p)|(\lambda(\overline{e})q(e_{i_1+1},...,e_{i_2-1}))q(e_1,...,e_{p-1})$ for $p=1,...,i_1-1$. Therefore $(\overline{e},e_{i_1+1},...,e_{i_2-1},e,e_1,...,e_{i_1-1})$ is a $f$-edge path and the resulting label of $f$ after sliding over it is $\lambda(\overline{e})q(e_{i_1+1},...,e_{i_2-1})q(e_1,...,e_{i_1-1})=\lambda(\overline{f})q(e_{i_1+1},...,e_{i_2-1})$. Since $\lambda'(\overline{f})=\lambda(\overline{f})$ and $\lambda(\overline{f})$ properly divides $\lambda(\overline{f})q(e_{i_1+1},...,e_{i_2-1})$, then we have $(\overline{e},e_{i_1+1},...,e_{i_2-1},e,e_1,...,e_{i_1-1},\overline{f})$ is a strict monotone cycle in $\Gamma'$.\\
On the other hand, if $e_{i_2}=\overline{f}$, we have $\lambda(\overline{f})|\lambda(f)q(e_{i_1+1},...,e_{i_2-1})$, i.e. $\lambda(\overline{f})h=\lambda(f)q(e_{i_1+1},...,e_{i_2-1})$. By Remark \ref{remark: redundant cycles}, we have $h\neq 1$, otherwise $(e_{i_1+1},...,e_{i_2})$ is a redundant cycle of $\gamma$. This shows that $(\overline{e},e_{i_1+1},...,e_{i_2-1},\overline{f})$ is a strict monotone cycle in $\Gamma'$.
\end{itemize}

For $\overline{e}_i/e$, we first notice that if there exists $e_{i_j}=\overline{f}$, then $\Gamma'$ is obtained from $\Gamma$ by $e_{i_j}/e$. By the previous argument, we know that $\Gamma'$ contains a strict monotone cycle. So it suffices to consider $e_{i_j}=f$ for all $j=1,...,k$. Since $\overline{e}_i$ can slide over $e$, we have $\lambda'(\overline{f})=\frac{\lambda(\overline{f})}{\lambda(e)}\lambda(\overline{e})$. Since $\gamma$ is a strict monotone cycle, we have $\lambda(f)|\lambda(\overline{e})q(e_1,...,e_{i_1-1})$. This implies that $\lambda(f)|\lambda'(\overline{f})q(e_1,...,e_{i_1-1})$. Then we have the following cases:
\begin{itemize}[topsep=0pt, itemsep=1ex]
\item[-] If $\lambda(f)$ properly divides $\lambda'(\overline{f})q(e_1,...,e_{i_1-1})$, then we have $(e_1,...,e_{i_1-1},f)$ is a strict monotone cycle in $\Gamma'$.
\item[-] If $\lambda(f)=\lambda'(\overline{f})q(e_1,...,e_{i_1-1})$. This case is similar to the case $e_i/e$. One can check that
\begin{itemize}[topsep=0pt, itemsep=1ex]
\item[-] If $k=1$, then $(\overline{e},e_{i_1+1},...,e_s,e,e_1,...,e_{i_1-1},f)$ is a strict monotone cycle in $\Gamma'$.
\item[-] If $k>1$, then $(\overline{e},e_{i_1+1},...,e_{i_2-1},f)$ is a strict monotone cycle in $\Gamma'$.
\end{itemize}
\end{itemize}

\end{proof}

\begin{lemma}\label{lemma: 3 rose slide over mobile edge}
Suppose $\Gamma\in$ RLG($G^+$) is a $n$-rose graph with $n\leq 3$ and $\gamma=(e_1,...,e_s,e)$ is a strict monotone cycle in $\Gamma$ for some $s\in \mathbb{Z}_{\geq0}$. Further, suppose there is a $\Gamma'\in$ RLG($G^+$) such that $\Gamma'$ can be obtained from $\Gamma$ by either $e_i/\overline{e}$ or $\overline{e}_i/\overline{e}$, then $\Gamma'$ contains a strict monotone cycle.
\end{lemma}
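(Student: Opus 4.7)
The plan is to mirror the case analysis of Lemma \ref{lemma: n rose slide over mobile edge}, adapted to a slide over $\overline{e}$ rather than $e$. The trivial cases $s=0$ and $n=1$ are disposed of immediately. Setting $f=e_i$, I write
\[
\gamma=(e_1,\dots,e_{i_1-1},e_{i_1},e_{i_1+1},\dots,e_{i_k-1},e_{i_k},e_{i_k+1},\dots,e_s,e)
\]
with $e_{i_j}\in\{f,\overline{f}\}$ and $e_r\not\in\{f,\overline{f}\}$ for $r$ outside $\{i_1,\dots,i_k\}$. Under $e_i/\overline{e}$ the slide hypothesis becomes $\lambda(\overline{e})\mid\lambda(f)$, and in $\Gamma'$ we have $\lambda'(f)=\lambda(f)\lambda(e)/\lambda(\overline{e})$ with all other labels unchanged.

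For $e_i/\overline{e}$, I split on whether $e_{i_1}=f$ or $e_{i_1}=\overline{f}$, and within each branch on whether the monotone divisibility $\lambda(e_{i_1})\mid \lambda(\overline{e})q(e_1,\dots,e_{i_1-1})$ is proper or an equality. In the proper-division cases, a short truncation of $\gamma$ is already a strict monotone cycle in $\Gamma'$. In the equality cases, I perform surgery on $\gamma$ along the lines of the previous lemma: splice $e$ or $\overline{e}$ adjacent to selected occurrences of $f,\overline{f}$ to produce a candidate cycle $\gamma'$, then verify it is a strict monotone cycle by chaining the original divisibility conditions on $\gamma$ with the slide hypothesis, the equality $\lambda(e_{i_1})=\lambda(\overline{e})q(e_1,\dots,e_{i_1-1})$, and Remark \ref{remark: redundant cycles}. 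The explicit form of $\gamma'$ depends on whether $k=1$ or $k>1$ and on the identity of $e_{i_2}$ when $k>1$.

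The case $\overline{e}_i/\overline{e}$ is treated by the same split: if some $e_{i_j}=\overline{f}$ then the move coincides with $e_{i_j}/\overline{e}$ applied with $\overline{f}$ in the role of $f$, so it is absorbed by the previous case; otherwise every $e_{i_j}=f$ and I run the parallel surgery.

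The main obstacle, and where the hypothesis $n\leq 3$ enters, is the equality subcase with $e_{i_1}=\overline{f}$ and $k>1$: the strictness of the constructed $\gamma'$ then depends on a spliced segment of the form $(e_{i_1+1},\dots,e_{i_2-1})$ of $\gamma$ having modulus distinct from $\pm 1$. Under $n\leq 3$ the only loops available besides $e$ and $f$ form a single loop $g$, so the segment lies in $\{g,\overline{g}\}$; its modulus is a power of $\lambda(\overline{g})/\lambda(g)$, which combined with the non-redundancy of $\gamma$ from Remark \ref{remark: redundant cycles} is forced to be nontrivial. For $n\geq 4$ an alternation between two additional loops could produce an integer modulus equal to $\pm 1$, and the surgery strategy would break down; handling that would require a genuinely different argument, which is why the statement is confined to $n\leq 3$.
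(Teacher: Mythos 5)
Your overall shape (trivial cases, decompose $\gamma$ by the occurrences of $f=e_i$, split on $e_{i_1}=f$ versus $\overline{f}$ and on proper division versus equality, splice $e$ or $\overline{e}$ into $\gamma$) matches the paper's, but the uniform dichotomy you propose breaks in the branch $e_{i_1}=f$ of $e_i/\overline{e}$, and that is a genuine gap. There the two available facts are $\lambda(\overline{e})\mid\lambda(f)$ (slide hypothesis) and $\lambda(f)\mid\lambda(\overline{e})q(e_1,\dots,e_{i_1-1})$ (monotone-cycle condition); together they only give $q(e_1,\dots,e_{i_1-1})\in\mathbb{Z}$. If the latter divisibility is proper, no truncation of $\gamma$ is verifiable: a cycle ending in $e$ would need $\lambda(e)\mid\lambda(\overline{e})q(e_1,\dots,e_{i_1-1})$, and a cycle ending in $f$ would need control of $\lambda(\overline{f})$ or of the new label $\lambda'(f)=\lambda(f)\lambda(e)/\lambda(\overline{e})$, none of which follow from the hypotheses. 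The paper instead splits on $i_1>1$ versus $i_1=1$: when $i_1>1$ the segment $(e_1,\dots,e_{i_1-1})$ consists only of the third loop $g$ and $\overline{g}$, so $q(e_1,\dots,e_{i_1-1})$ being an integer $\neq 1$ (by non-redundancy) forces $q(g)$ or $q(\overline{g})$ to be a non-trivial integer, i.e.\ $g$ or $\overline{g}$ is itself a strict (virtually) ascending loop and hence a one-edge strict monotone cycle; when $i_1=1$ the divisibility degenerates to the equality $\lambda(f)=\lambda(\overline{e})$ and the surgery analysis begins. This ``the third loop becomes the strict monotone cycle'' device is the main way $n\leq 3$ enters, and it also closes the subcases $e_{i_2}=\overline{f}$ (for $e_{i_1}=f$) and $e_{i_2}=f$ (for $e_{i_1}=\overline{f}$); your proposal has no mechanism covering these, since your only stated use of $n\leq 3$ is to certify strictness of a spliced cycle in one subcase.

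Two smaller points. First, in the equality subcase of $\overline{e}_i/\overline{e}$ (all $e_{i_j}=f$), the correct conclusion is not a spliced cycle but a contradiction: the equality forces $q(e_1,\dots,e_{i_1-1})=1$, which non-redundancy rules out, so only the proper-division branch survives there. Second, your diagnosis of why $n\geq 4$ fails is off: the obstruction is not that an alternating segment can have modulus $\pm 1$ (modulus $1$ is already excluded by non-redundancy, and signs are irrelevant in RLG($G^{+}$)), but that a segment mixing two extra loops can have non-trivial integer modulus without either individual loop having integer modulus, so the ``one-edge strict monotone cycle'' conclusion is unavailable; this is exactly what the paper's $4$-rose counterexample exploits.
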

\begin{proof} We will use the same idea as in the proof of the previous lemma. First, if $s=0$ or if $n=1$, then there is no edge $e_i$ to slide over $e$. Therefore $\Gamma=\Gamma'$ and $e$ is a strict monotone cycle of one single edge in $\Gamma'$. 

Next we consider the case where $n=2$, then $\Gamma$ is a wedge sum of two oriented loops. We denote $e_i$ by $f$. By Remark \ref{remark: redundant cycles}, we have $\gamma$ is of the form $(f,...,f,e)$.
\begin{itemize}[topsep=0pt, itemsep=1ex]
\item[-] For $e_i/\overline{e}$, we have $\lambda(\overline{e})|\lambda(f)$. Since $\gamma$ is a strict monotone cycle, we have $\lambda(f)|\lambda(\overline{e})$. This implies that $\lambda(\overline{e})=\lambda(f)$. Moreover, we have $\lambda'(f)=\lambda(e)$. If $s=1$, then we have $\lambda(e)$ properly divides $\lambda(\overline{e})q(f)$. This implies that $\lambda(e)|\lambda(\overline{f})$ and $\lambda(e)\neq \lambda(\overline{f})$. It follows that $f$ is a strict virtually ascending loop in $\Gamma'$, which is a strict monotone cycle; if $s>1$, then we have $\lambda(f)|\lambda(\overline{e})q(f)$, i.e. $\lambda(f)|\lambda(\overline{f})$. It follows that $\lambda(\overline{f})=\lambda(f)h$ for some $h\in \mathbb{Z}_{>0}$. By Remark \ref{remark: redundant cycles}, we have $h\neq 1$. This implies that $\lambda(\overline{f})=\lambda(\overline{e})h$. Thus one can check the cycle $(\overline{e},f)$ is a strict monotone cycle in $\Gamma'$.
\item[-] For $\overline{e}_i/\overline{e}$, we have $\lambda(\overline{e})|\lambda(\overline{f})$. Since $\gamma$ is a strict monotone cycle, we have $\lambda(f)|\lambda(\overline{e})$. It follows that $\lambda(f)|\lambda(\overline{f})$. By Remark \ref{remark: redundant cycles}, we have $\lambda(f)\neq \lambda(\overline{f})$. Thus one can check the edge cycle $(e,f)$ is a strict monotone cycle in $\Gamma'$.

\end{itemize}

Now we consider the case where $n=3$, then $\Gamma$ is a wedge sum of three oriented loops. We denote $e_i$ by $f$ and the third loop by $g$. If $e_t\in \{f,\overline{f}\}$ for all $t$, this is covered by the 2-rose graph. Otherwise we can rewrite $\gamma$ as
$$\gamma=(e_1,...,e_{i_1-1},e_{i_1},e_{i_1+1},...,e_{i_k-1},e_{i_k},e_{i_k+1},...,e_s,e)$$
where each $e_{i_j}\in \{f,\overline{f}\}$ and $e_r\in \{g,\overline{g}\}$ for all $j=1,...,k$ and $r\neq i_j$.

For $e_i/\overline{e}$, we have $\lambda(\overline{e})|\lambda(f)$. Suppose that $e_{i_1}=f$, since $\gamma$ is a strict monotone cycle, we have $\lambda(f)|\lambda(\overline{e})q(e_1,...,e_{i_1-1})$. This implies that $\lambda(\overline{e})|\lambda(\overline{e})q(e_1,...,e_{i_1-1})$. Then we have the following cases:
\begin{itemize}[topsep=0pt, itemsep=1ex]
\item[-] If $i_1>1$, by Remark \ref{remark: redundant cycles}, we have $\lambda(\overline{e})\neq \lambda(\overline{e})q(e_1,...,e_{i_1-1})$, otherwise  the edge cycle $(e_1,...,e_{i_1-1})$ is a redundant subcycle of $\gamma$. This implies that either $q(g)$ or $q(\overline{g})$ is a non-trivial integer. Thus we have either $g$ or $\overline{g}$ is a strict monotone cycle in $\Gamma'$.
\item[-] If $i_1=1$, then we have $\lambda(\overline{e})=\lambda(f)$ and $\lambda'(f)=\lambda(e)$. Then we have the following two subcases:
\begin{itemize}[topsep=0pt, itemsep=1ex]
\item[-] If $k=1$, we claim that $(e_2,...,e_s,f)$ is a strict monotone cycle in $\Gamma'$. We notice that $(e_2,...,e_s)$ is a $\overline{f}$-edge path and the resulting label of $\overline{f}$ after sliding over it is $\lambda(\overline{f})q(e_2,...,e_s)=\lambda(\overline{e})q(e_1,...,e_s)$. Since $\gamma$ is a strict monotone cycle, we have $\lambda(e)$ properly divides $\lambda(\overline{e})q(e_1,...,e_s)$. It follows that $\lambda'(f)$ properly divides $\lambda(\overline{f})q(e_2,...,e_s)$. This shows that $(e_2,...,e_s,f)$ is a strict monotone cycle in $\Gamma'$.
\item[-] If $k>1$, then there exists $e_{i_2}\in \{f,\overline{f}\}$ in the edge path $(e_2,...,e_s)$. In addition, $(e_2,...,e_{i_2-1})$ is a $\overline{f}$-edge path in $\Gamma'$ and the resulting label of $\overline{f}$ after sliding over it is $\lambda(\overline{f})q(e_2,...,e_{i_2-1})$. 

If $e_{i_2}=f$, one can check the edge cycle $(e_2,...,e_{i_2-1},\overline{e},f)$ is a strict monotone cycle in $\Gamma'$.

On the other hand, if $e_{i_2}=\overline{f}$, then we have $\lambda(\overline{f})|\lambda(\overline{f})q(e_2,...,e_{i_2-1})$. Thus we have either $q(g)$ or $q(\overline{g})$ is a non-trivial integer. This implies that either $g$ or $\overline{g}$ is a strict monotone cycle in $\Gamma'$.
\end{itemize}
\end{itemize}
On the other hand, suppose that  $e_{i_1}=\overline{f}$. Since $\gamma$ is a strict monotone cycle, we have $\lambda(\overline{f})|\lambda(\overline{e})q(e_1,...,e_{i_1-1})$. Recall that $\lambda(\overline{e})|\lambda(f)$, then $\lambda(\overline{f})|\lambda(f)q(e_1,...,e_{i_1-1})$. Since there exists a $j$ such that $e_{i_j}=f$, otherwise there is no $f$ in $\gamma$, then we have $k>1$. Thus there exists $e_{i_2}\in \{f,\overline{f}\}$ in the edge subpath $(e_{i_1+1},...,e_s)$.
\begin{itemize}[topsep=0pt, itemsep=1ex]
\item[-] If $\lambda(\overline{f})$ properly divides $\lambda(f)q(e_1,...,e_{i_1-1})$, one can check $(e,e_1,...,e_{i_1-1},\overline{f})$ is a strict monotone cycle in $\Gamma'$.
\item[-] Otherwise we have $\lambda(\overline{f})=\lambda(f)q(e_1,...,e_{i_1-1})$, then $\lambda(\overline{f})=\lambda(\overline{e})q(e_1,...,e_{i_1-1})$. \\
If $e_{i_2}=f$, then $\lambda(f)|\lambda(\overline{e})q(e_1,...,e_{i_2-1})$. It follows that $\lambda(f)|\lambda(f)q(e_{i_1+1},...,e_{i_2-1})$. By Remark \ref{remark: redundant cycles}, we have $q(e_{i_1+1},...,e_{i_2-1}) \neq 1$. This implies that either $q(g)$ or $q(\overline{g})$ is a non-trivial integer. Thus we have either $g$ or $\overline{g}$ is a strict monotone cycle in $\Gamma'$.\\
On the other hand, if $e_{i_2}=\overline{f}$, then we have $\lambda(\overline{f})|\lambda(f)q(e_{i_1+1},...,e_{i_2-1})$, i.e. $\lambda(\overline{f})h=\lambda(f)q(e_{i_1+1},...,e_{i_2-1})$. By Remark \ref{remark: redundant cycles}, we have $h\neq 1$. This implies that $(e,e_{i_1+1},...,e_{i_2-1},\overline{f})$ is a strict monotone cycle in $\Gamma'$.
\end{itemize}

For $\overline{e}_i/\overline{e}$, we have $\lambda(\overline{e})|\lambda(\overline{f})$. We first notice that if there exists $e_{i_j}=\overline{f}$, then $\Gamma'$ is obtained from $\Gamma$ by $e_{i_j}/\overline{e}$. By the previous argument, we know that $\Gamma'$ contains a strict monotone cycle. So it suffices to consider $e_{i_j}=f$ for all $j=1,...,k$. Since $\gamma$ is a strict monotone cycle, we have $\lambda(f)|\lambda(\overline{e})q(e_1,...,e_{i_1-1})$. This implies that $\lambda(f)|\lambda(\overline{f})q(e_1,...,e_{i_1-1})$. Then we have the following cases:
\begin{itemize}[topsep=0pt, itemsep=1ex]
\item[-] If $\lambda(f)$ properly divides $\lambda(\overline{f})q(e_1,...,e_{i_1-1})$, one can check $(e, e_1,...,e_{i_1-1},f)$ is a strict monotone cycle in $\Gamma'$.
\item[-] Otherwise we have $\lambda(f)=\lambda(\overline{f})q(e_1,...,e_{i_1-1})$, then $\lambda(\overline{e})=\lambda(\overline{f})$ and $\lambda(f)=\lambda(\overline{e})q(e_1,...,e_{i_1-1})$. This implies that $\lambda(\overline{e})q(e_1,...,e_{i_1})=\lambda(\overline{e})q(e_1,...,e_{i_1-1})\frac{\lambda(\overline{f})}{\lambda(f)}=\lambda(\overline{f})=\lambda(\overline{e})$. It follows that $q(e_1,...,e_{i_1-1})=1$. By Remark \ref{remark: redundant cycles}, this case can not happen. 
\end{itemize}
\end{proof}

\begin{example} We provide an example to show that Lemma \ref{lemma: 3 rose slide over mobile edge} fails for $n$-rose graph with $n>3$. The labeled graphs in the figure below represent the same GBS group. We denote the labeled graph on the left (resp. right) by $\Gamma$ (resp. $\Gamma'$). $\Gamma$ has a strict monotone cycle $(f_3,f_2,f_4,f_1)$ with modulus 3 and $\Gamma'$ is obtained from $\Gamma$ by sliding $f_4$ over $\overline{f}_1$. 

We claim that $\Gamma'$ does not have any strict monotone cycles. To prove our claim, it suffices to show that there does not exist any $\overline{e}$-edge path $\alpha$ in $\Gamma'$ such that $\lambda'(e)|\lambda'(\overline{e})q'(\alpha)$ for any edge $e$ in $\Gamma'$. First, note that we can only slide $f_1,\overline{f}_1$ and $f_4$ in $\Gamma'$. 
\begin{itemize}[topsep=0pt, itemsep=1ex]
\item[-]For sliding $f_1$, the only possible $f_1$-edge path is $f_4$. However we have $30\nmid 14 q'(f_4)$. 
\item[-]For sliding $\overline{f}_1$, every $\overline{f}_1$-edge path consists of $\{f_2,\overline{f}_2,f_3,\overline{f}_3\}$. Since the prime factor 7 only appears in $\lambda'(f_1),\lambda'(f_4)$ and $\lambda'(\overline{f}_4)$, then $30q'(\alpha)$ can not be divisible by 14 for any $\overline{f}_1$-edge path $\alpha$.
\item[-] For sliding $f_4$, since $\lambda'(f_4)$ is only divisible by $\lambda'(f_1)$, then the first slide move of $f_4$ we can perform is $f_4/f_1$. Thus it suffices to show that there does not exist any $f_4$-edge path $\alpha$ in $\Gamma$ such that $21|30q(\alpha)$. Suppose there exists a such $\alpha$, then we have $q(\alpha)=q(f_1)^{\alpha_1}q(f_2)^{\alpha_2}q(f_3)^{\alpha_3}=(\frac{15}{7})^{\alpha_1}(\frac{5}{2})^{\alpha_2}(\frac{4}{5})^{\alpha_3}$ for some $\alpha_1,\alpha_2,\alpha_3\in \mathbb{Z}$. Since the prime factor 7 only appears on $\lambda(f_1)$ and $\lambda(\overline{f}_4)$, then we must have $\alpha_1<0$, otherwise $30q(\alpha)$ is not divisible by 21. In addition, since 30 only contains one prime factor 3 and only 15 in the above decomposition of $q(\alpha)$ contains prime factor 3. Thus, to ensure $30q(\alpha)$ is an integer, we must have $\alpha_1=-1$. It follows that $30q(\alpha)$ doesn't contain any prime factor 3. Therefore $21\nmid 30q(\alpha)$, which is a contradiction.
\end{itemize}

\begin{center}
\begin{tikzpicture}
\begin{scope}[decoration={
    markings,
    mark=at position 0.5 with {\arrow[scale=1.5]{>}}}
    ]
      \node[label={below, yshift=-0.3cm:}] at (0.7,3.05) {\scriptsize$10$};
      \node[label={below, yshift=-0.3cm:}] at (0.7,3.45) {\scriptsize$8$};
      \node[label={below, yshift=-0.3cm:}] at (1.85,3.05) {\scriptsize$21$};
      \node[label={below, yshift=-0.3cm:}] at (1.85,3.45) {\scriptsize$30$};
      \node[label={below, yshift=-0.3cm:}] at (0.95,4) {\scriptsize$14$};
      \node[label={below, yshift=-0.3cm:}] at (1.5,4) {\scriptsize$30$};
      \node[label={below, yshift=-0.3cm:}] at (1.5,2.5) {\scriptsize$6$};
      \node[label={below, yshift=-0.3cm:}] at (0.95,2.5) {\scriptsize$15$};
      \node[label={below, yshift=0.3cm:}] at (1.7,5) {\small$f_1$};
      \node[label={below, yshift=0.3cm:}] at (0.8,1.55) {\small$f_2$};
      \node[label={below, yshift=0.3cm:}] at (0,3.85) {\small$f_3$};
      \node[label={below, yshift=0.3cm:}] at (2.6,2.8) {\small$f_4$};

      \node[label={below, yshift=-0.3cm:}] at (6.7,3.05) {\scriptsize$10$};
      \node[label={below, yshift=-0.3cm:}] at (6.7,3.45) {\scriptsize$8$};
      \node[label={below, yshift=-0.3cm:}] at (7.85,3.05) {\scriptsize$21$};
      \node[label={below, yshift=-0.3cm:}] at (7.85,3.45) {\scriptsize$14$};
      \node[label={below, yshift=-0.3cm:}] at (6.95,4) {\scriptsize$14$};
      \node[label={below, yshift=-0.3cm:}] at (7.5,4) {\scriptsize$30$};
      \node[label={below, yshift=-0.3cm:}] at (7.5,2.5) {\scriptsize$6$};
      \node[label={below, yshift=-0.3cm:}] at (6.95,2.5) {\scriptsize$15$};
      \node[label={below, yshift=0.3cm:}] at (7.7,5) {\small$f_1$};
      \node[label={below, yshift=0.3cm:}] at (6.8,1.55) {\small$f_2$};
      \node[label={below, yshift=0.3cm:}] at (6,3.85) {\small$f_3$};
      \node[label={below, yshift=0.3cm:}] at (8.6,2.8) {\small$f_4$};

      \node[label={below, yshift=0.3cm:}] at (4.25,3.5) {\small$f_4/\overline{f}_1$};

      \tikzset{enclosed/.style={draw, circle, inner sep=0pt, minimum size=.1cm, fill=black}}
      
      \node[enclosed, label={right, yshift=.2cm:}] at (1.25,3.25) {};
      \node[label={right, yshift=-.2cm:}] at (1.25,4.25) {};

      \draw(1.25, 3.25) arc(240:120:1);
      \draw(1.25, 4.99) arc(60:-60:1);
      \draw(1.25, 3.25) arc(120:240:1);
      \draw(1.25, 1.51) arc(-60:60:1);
      \draw(1.25, 3.25) arc(140:40:1);
      \draw(1.25, 3.25) arc(40:140:1);
      \draw(2.8, 3.25) arc(-40:-140:1);
      \draw(-0.3, 3.25) arc(-140:-40:1);

      \draw(7.25, 3.25) arc(240:120:1);
      \draw(7.25, 4.99) arc(60:-60:1);
      \draw(7.25, 3.25) arc(120:240:1);
      \draw(7.25, 1.51) arc(-60:60:1);
      \draw(7.25, 3.25) arc(140:40:1);
      \draw(7.25, 3.25) arc(40:140:1);
      \draw(8.8, 3.25) arc(-40:-140:1);
      \draw(5.7, 3.25) arc(-140:-40:1);

      \draw[->, thick] (1.28,4.95) -- (1.29,4.95);
      \draw[->, thick] (2.75,3.22) -- (2.75,3.21);
      \draw[->, thick] (1.23,1.53) -- (1.22,1.53);
      \draw[->, thick] (-0.25,3.26) -- (-0.25,3.27);
      \draw[->, thick] (7.28,4.95) -- (7.29,4.95);
      \draw[->, thick] (8.75,3.22) -- (8.75,3.21);
      \draw[->, thick] (7.23,1.53) -- (7.22,1.53);
      \draw[->, thick] (5.75,3.26) -- (5.75,3.27);
      \draw[->, thick] (3.5, 3.25) -- (5, 3.25);

      \end{scope}
\end{tikzpicture}
\end{center}
\end{example}

\subsection{Small-rose GBS groups}
\begin{prop}\label{prop: smc}
Let $G$ be a $n$-rose GBS group such that the underlying $n$-rose graph $\Gamma$ satisfies either one of the following conditions:
\begin{itemize}[topsep=0pt, itemsep=1ex]
\item[(a)] $\Gamma$ has only one mobile edge; or
\item[(b)] $n\leq 3$.
\end{itemize}
If $\Gamma$ has a strict monotone cycle and $\Gamma'\in$ RLG($G$), then $\Gamma'$ also has a strict monotone cycle.
\end{prop}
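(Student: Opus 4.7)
By Theorem~\ref{theorem: 3 moves}, $\Gamma$ and $\Gamma'$ are connected by a finite sequence of slide, induction, and $\mathscr{A}^{\pm 1}$-moves through reduced graphs. The plan is to show that every single such move carries a graph with a strict monotone cycle to another graph with a strict monotone cycle; the conclusion then follows by induction on the length of the sequence. As a preliminary simplification I would pass to $G^+$: every condition in the definition of a strict monotone cycle (divisibility of labels and integrality of the modulus) is preserved under taking absolute values, and each of the three move types has a direct analogue in $\mathrm{RLG}(G^+)$, so we may assume all labels are positive and invoke the lemmas of Section~3.2 directly.

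The main case is a slide $f/g$. Let $\gamma=(e_1,\ldots,e_s,e)$ be a strict monotone cycle in the source graph. I would split into four subcases: (i) $f\notin E(\gamma)$, where the labels on $\gamma$ are untouched and $\gamma$ survives intact; (ii) $f\in E(\gamma)$ with $g\notin\{e,\overline{e}\}$, covered by Lemma~\ref{lemma: slide over non-mobile edge}, which has no hypothesis on $n$; (iii) $f\in E(\gamma)$ with $g=e$, covered by Lemma~\ref{lemma: n rose slide over mobile edge} under the $n$-rose hypothesis; (iv) $f\in E(\gamma)$ with $g=\overline{e}$, covered by Lemma~\ref{lemma: 3 rose slide over mobile edge} under the hypothesis $n\leq 3$. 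Hypothesis~(b) of the proposition directly supplies (iii) and (iv). For hypothesis~(a), where $e$ is the sole mobile edge and therefore every $e_i$ is a non-mobile loop, I would give a direct surgery argument modelled on the proof of Lemma~\ref{lemma: 3 rose slide over mobile edge}, using the uniqueness of the mobile edge to collapse most of the combinatorial case splitting (all the problematic case-branching in that proof came from the possibility of multiple mobile loops interacting with $e$, which is excluded here).

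For induction and $\mathscr{A}^{\pm 1}$-moves, both rely on a loop with labels $(1,lm)$ on one end; any nontrivial instance requires $l\neq\pm 1$, which together with $m\neq 0$ forces $|lm|\geq 2$. The source graph therefore already carries a strict ascending loop, a strict monotone cycle of length one. Under an induction the labels of this loop are unchanged. Under $\mathscr{A}^{-1}$ the loop acquires labels $(k,klm)$ with $|k|\geq 2$, which is a strict virtually ascending loop and again a strict monotone cycle; the reverse $\mathscr{A}$-move sends it back to a strict ascending loop. Hence both source and target of each such move carry a strict monotone cycle. The main obstacle I anticipate is the $n$-rose hypothesis in Lemmas~\ref{lemma: n rose slide over mobile edge} and~\ref{lemma: 3 rose slide over mobile edge}, since an intermediate $\mathscr{A}$-move destroys the $n$-rose structure and a subsequent slide of type (iii) or (iv) is not literally covered. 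I would resolve this by observing that after such an $\mathscr{A}$-move the graph already carries the strict (virtually) ascending loop just described, and any subsequent slides either avoid that loop (handled by Lemma~\ref{lemma: slide over non-mobile edge}, which carries no $n$-rose hypothesis) or interact with it in a controlled way that can be analysed directly---in either case the strict monotone cycle survives the non-$n$-rose segment of the move sequence and is inherited by $\Gamma'$.
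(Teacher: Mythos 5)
Your overall architecture matches the paper's proof: reduce to single moves via Theorem~\ref{theorem: 3 moves}, observe that induction and $\mathscr{A}^{\pm 1}$-moves force strict monotone cycles on both the source and the target, and for a slide split according to whether the sliding edge lies on $\gamma$ and whether it slides over $e$, $\overline{e}$, or some other edge, invoking Lemmas~\ref{lemma: slide over non-mobile edge}, \ref{lemma: n rose slide over mobile edge} and~\ref{lemma: 3 rose slide over mobile edge} for part~(b). That portion of your proposal is correct and is exactly what the paper does.

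The genuine gap is in part~(a). There you must handle a slide of some $e_i$ or $\overline{e}_i$ over $e$ or $\overline{e}$ for arbitrary $n$, and you propose an unwritten surgery argument ``modelled on'' Lemma~\ref{lemma: 3 rose slide over mobile edge}. That lemma genuinely uses $n\leq 3$, and the example following it shows its conclusion fails for a $4$-rose, so the claim that the case-branching collapses because there is only one mobile edge cannot simply be asserted; no argument is given and it is not clear one exists along those lines. The paper's route is different and much shorter: under hypothesis~(a) every $e_i$ is non-mobile, and by Lemma 3.17 of Clay--Forester \cite{MM08} a non-mobile edge of a reduced labeled graph cannot slide over a mobile edge, so the slides $e_i/e$ and $e_i/\overline{e}$ never occur and only Lemma~\ref{lemma: slide over non-mobile edge} is needed. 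Without that observation (or a completed surgery), part~(a) of your proof is open.

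A smaller remark: you rightly flag that an intermediate $\mathscr{A}$-move destroys the rose structure, so a subsequent slide is not literally covered by the rose lemmas; but your resolution (``analysed directly in a controlled way'') is a placeholder, not a proof. The paper's write-up is silent on this point as well; in the one place the proposition is applied (Corollary~\ref{coro: ascending=smc}) the relevant move sequence consists of slides only, so every intermediate graph remains an $n$-rose and the issue never arises. If the proposition is wanted in the generality stated, this segment requires an actual argument rather than either your sketch or the paper's omission.
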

\begin{proof}
By Theorem \ref{theorem: 3 moves}, we only need to consider when $\Gamma$ and $\Gamma'$ are related by a slide, induction or $\mathscr{A}^{\pm1}$-move. Recall that an induction move preserves the involved strict ascending loop. An $\mathscr{A}^{+ 1}$-move (resp. $\mathscr{A}^{-1}$-move) takes a strict virtually ascending loop (resp. strict ascending loop) to a strict ascending loop (resp. strict virtually ascending loop). Therefore, if $\Gamma$ and $\Gamma'$ are related by an induction or $\mathscr{A}^{\pm 1}$-move, then they both contain strict monotone cycles.

Now assume that $\Gamma$ contains a strict monotone cycle $\gamma=(e_1,...,e_s,e)$ and that $\Gamma'$ is obtained by sliding an edge $f$ in $\Gamma$. Clearly, if $f\notin E(\gamma)$, then $\gamma$ is also a strict monotone cycle in $\Gamma'$. If $f\in E(\gamma)$, we first consider part (a). By Lemma 3.17 in \cite{MM08}, we know that $\Gamma'$ can not be obtained from $\Gamma$ by $f/e$ or $f/\overline{e}$. Thus, by Lemma \ref{lemma: slide over non-mobile edge}, we have $\Gamma'$ also has a strict monotone cycle. Part (b) follows directly by Lemma \ref{lemma: slide over non-mobile edge}, Lemma \ref{lemma: n rose slide over mobile edge} and Lemma \ref{lemma: 3 rose slide over mobile edge}.
\end{proof}

\begin{cor}\label{coro: ascending=smc}
Let $G$ be a $n$-rose GBS group such that the underlying $n$-rose graph $\Gamma$ satisfies either one of the following conditions:
\begin{itemize}[topsep=0pt, itemsep=1ex]
\item[(a)] $\Gamma$ has only one mobile edge; or
\item[(b)] $n\leq 3$.
\end{itemize}
Then $G$ is ascending if and only if $\Gamma$ has a strict monotone cycle.
\end{cor}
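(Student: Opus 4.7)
The plan is to prove the two directions of the equivalence separately, with the backward direction resting on Proposition \ref{prop: smc}.

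The forward direction is immediate from Remark \ref{remark: non-sacending}: if $\Gamma$ has a strict monotone cycle $\gamma=(e_1,\dots,e_s,e)$, then sliding $\overline{e}$ along $(e_1,\dots,e_s)$ turns $e$ into either a strict ascending or a strict virtually ascending loop, and in the latter case a single $\mathscr{A}^{+1}$-move then produces a strict ascending loop. Either way, $G$ is ascending by definition.

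For the converse, I would assume $G$ is ascending. By the definition, there is a labeled graph $\Gamma_0$ in the canonical deformation space of $G$ carrying a strict ascending loop $e_0$, but $\Gamma_0$ need not be reduced. My first step is to collapse $\Gamma_0$ down to some $\Gamma''\in \mathrm{RLG}(G)$ while preserving a strict monotone cycle. The key observation is that loops are never collapsible (the two endpoints of a loop always lift to vertices in a single $G$-orbit in the tree), and any collapse at the vertex carrying $e_0$ multiplies both $\lambda(e_0)$ and $\lambda(\overline{e}_0)$ by the same non-zero integer. Hence both the integrality of $\lambda(\overline{e}_0)/\lambda(e_0)$ and the fact that $|\lambda(\overline{e}_0)/\lambda(e_0)|>1$ are preserved, so $e_0$ survives in $\Gamma''$ as a strict (ascending or virtually ascending) loop, and in particular as a strict monotone cycle.

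The final step is to transport this strict monotone cycle from $\Gamma''$ back to $\Gamma$ via Proposition \ref{prop: smc}. Although that proposition is phrased as a one-way implication, its proof really establishes a biconditional: the induction and $\mathscr{A}^{\pm 1}$-move cases are symmetric in $\Gamma$ and $\Gamma'$ essentially by construction, while for slides both the $n$-rose topology and the number of mobile edges are invariants, so Lemmas \ref{lemma: slide over non-mobile edge}, \ref{lemma: n rose slide over mobile edge}, and \ref{lemma: 3 rose slide over mobile edge} apply equally well with the roles of $\Gamma$ and $\Gamma'$ reversed. Chaining these local biconditionals along the sequence of moves between $\Gamma''$ and $\Gamma$ provided by Theorem \ref{theorem: 3 moves} then yields that $\Gamma$ contains a strict monotone cycle. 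The main subtlety to handle carefully is articulating why Proposition \ref{prop: smc} really is a biconditional rather than just an implication; the reduction step and the forward direction are essentially bookkeeping.
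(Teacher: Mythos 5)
Your forward direction and your reduction of the non-reduced graph to a reduced one are fine (and in fact more careful than the paper, which simply asserts the existence of a reduced labeled graph with a strict ascending loop). The problem is the last step. You propose to run Proposition \ref{prop: smc} ``backwards'' by chaining local biconditionals along the whole sequence of moves from $\Gamma''$ to $\Gamma$ given by Theorem \ref{theorem: 3 moves}. But the slide cases of that proposition rest on Lemmas \ref{lemma: n rose slide over mobile edge} and \ref{lemma: 3 rose slide over mobile edge}, whose hypotheses require the graph being operated on to be an $n$-rose (with $n\leq 3$). The intermediate graphs of a general sequence of slides, inductions and $\mathscr{A}^{\pm 1}$-moves need not be roses: an $\mathscr{A}^{-1}$-move creates a second vertex, and any slides of mobile edges performed between an $\mathscr{A}^{-1}$-move and the next $\mathscr{A}$-move take place in a non-rose graph, where none of your ``local biconditionals'' has been established. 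Saying that ``the $n$-rose topology is an invariant'' is only true for slides, not for the other two moves, so the chain genuinely breaks there. (There is also a smaller unaddressed point: under hypothesis (a) you must know that the one-mobile-edge condition persists to the graph at which you apply the lemmas; this follows from Corollary 3.15 of \cite{MM08} only along slide moves.)

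The paper closes exactly this gap with a different bookkeeping device: orient the sequence from $\Gamma$ toward the graph with the strict ascending loop, and let $\Gamma_l$ be the \emph{first} graph in the sequence containing a strict monotone cycle. Since an induction or $\mathscr{A}^{\pm 1}$-move forces a strict monotone cycle on both of its endpoints, every move before $\Gamma_l$ must be a slide; hence $\Gamma_0,\dots,\Gamma_l$ are all $n$-roses (with the mobile-edge count preserved), and Proposition \ref{prop: smc} can then be applied in its stated, forward direction with $\Gamma_l$ as the source and the original $\Gamma\in\mathrm{RLG}(G)$ as the target. No reversal of the proposition is needed. You should replace the chaining argument with this ``first graph with a strict monotone cycle'' observation; as written, your proof does not go through.
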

\begin{proof}
If $G$ is ascending, then there exists $\Gamma'\in$ RLG($G$) that contains a strict ascending loop. By Theorem \ref{theorem: 3 moves}, there exists a finite sequence
$$\Gamma=\Gamma_0\rightarrow \Gamma_1\rightarrow...\rightarrow \Gamma_k=\Gamma'$$
such that each $\Gamma_i\rightarrow \Gamma_{i+1}$ is either a slide, an induction or an $\mathscr{A}^{\pm 1}$-move. Let $\Gamma_l$ be the first labeled graph in the sequence that contains a strict monotone cycle. If $l=0$, then we have $\Gamma=\Gamma_l$ and it has a strict monotone cycle; if $l>0$, we consider the preceding move $\Gamma_{l-1}\rightarrow \Gamma_l$. Since induction moves and $\mathscr{A}^{\pm 1}$-moves require both labeled graphs contain strict monotone cycles, then $\Gamma_{l-1}\rightarrow \Gamma_l$ must be a slide move. Moreover, the subsequence $\Gamma_0\rightarrow...\rightarrow \Gamma_l$ must be a sequence of slide moves too. We notice that there is only one vertex in $\Gamma$, so any labeled graph that is obtained from $\Gamma$ by a sequence of slide moves is also a $n$-rose graph. In particular, $\Gamma_l$ is a $n$-rose graph. Under condition (a), by Corollary 3.15 in \cite{MM08}, we have $\Gamma_l$ also has only one mobile edge. Thus, by part (a) of Proposition \ref{prop: smc}, we have $\Gamma$ contains a strict monotone cycle. Under condition (b) we apply part (b) of Proposition \ref{prop: smc}. Thus under either condition (a) or (b), if $G$ is ascending then $\Gamma$ has a strict monotone cycle. The converse is Lemma 3.6 in \cite{MM08}.
\end{proof}

Now we will give a proof of our first main result:

\begin{theorem}\label{thm: determine ascending}
Let $G$ be a $n$-rose GBS group that satisfies either one of the following conditions:
\begin{itemize}[topsep=0pt, itemsep=1ex]
\item[(a)] the underlying $n$-rose graph $\Gamma$ has only one mobile edge; or
\item[(b)] $n\leq 3$.
\end{itemize} 
Then there is an algorithm to determine whether $G$ is ascending.
\end{theorem}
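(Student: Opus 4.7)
The plan is to combine the two main results already developed in this section; essentially all of the work has been done and this theorem is the payoff. Given as input a presentation of the $n$-rose GBS group $G$, we immediately read off the defining $n$-rose labeled graph $\Gamma$ (with its loop labels). Under hypothesis (a) or (b), Corollary \ref{coro: ascending=smc} converts the question ``is $G$ ascending?'' into the purely graph-theoretic question ``does $\Gamma$ contain a strict monotone cycle?'' Then Proposition \ref{prop: smc algorithm } provides an explicit algorithm to decide the latter question for any $n$-rose graph.

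More concretely, I would describe the algorithm as follows. First, scan the loops of $\Gamma$ to see if the set $\{q(g_1),q(\overline{g}_1),\dots,q(g_n),q(\overline{g}_n)\}$ contains a non-unit integer; if so, $\Gamma$ already has a strict (virtually) ascending loop and we output ``ascending.'' Otherwise, for each mobile edge $e$ of $\Gamma$ (mobility being decidable by Remark 3.13 of \cite{MM08}), run the integer linear programming routine from the proof of Proposition \ref{prop: smc algorithm } to test whether $\Pi(e)\cap\Lambda(e)\neq\emptyset$. Output ``ascending'' if any of these tests is positive, and ``non-ascending'' otherwise.

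Correctness of this procedure is precisely Corollary \ref{coro: ascending=smc}: under either condition (a) or (b), $G$ is ascending if and only if the specific labeled graph $\Gamma$ has a strict monotone cycle (not merely some graph in $\mathrm{RLG}(G)$). Termination and correctness of the strict-monotone-cycle subroutine is Proposition \ref{prop: smc algorithm }, which ultimately relies on Lenstra's algorithm for integer linear programming \cite{L83}. There is no genuine obstacle at this stage: the substantive content---namely, that having a strict monotone cycle is an invariant of $\mathrm{RLG}(G)$ in the two relevant regimes (Proposition \ref{prop: smc}), together with the algorithmic detection of such cycles in an $n$-rose graph---has already been established in the preceding subsections. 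The theorem is therefore a direct consequence of stringing these two results together.
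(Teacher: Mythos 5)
Your proposal is correct and matches the paper's proof exactly: the paper also deduces the theorem directly from Proposition \ref{prop: smc algorithm } and Corollary \ref{coro: ascending=smc}. Your added description of the algorithm's steps is a faithful unpacking of the same argument.
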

\begin{proof}
This follows directly by Proposition \ref{prop: smc algorithm } and Corollary \ref{coro: ascending=smc}.
\end{proof}

\begin{example}\label{example: 3-rose non-ascending}
Let $p$ be a positive integer corpime to 7, and let $G$ be a GBS group represented by the following labeled graph. We apply the algorithm in Theorem \ref{thm: determine ascending} (i.e. the algorithm in Proposition \ref{prop: smc algorithm }) to show that $G$ is non-ascending.

\begin{center}
\begin{tikzpicture}
\begin{scope}[decoration={
    markings,
    mark=at position 0.5 with {\arrow[scale=1.5]{>}}}
    ]
      \node[label={below, yshift=-0.3cm:}] at (0.7,3.05) {\scriptsize$10$};
      \node[label={below, yshift=-0.3cm:}] at (0.7,3.45) {\scriptsize$8$};
      \node[label={below, yshift=-0.3cm:}] at (1.85,3.05) {\scriptsize$15$};
      \node[label={below, yshift=-0.3cm:}] at (1.85,3.45) {\scriptsize$6$};
      \node[label={below, yshift=-0.3cm:}] at (0.95,4) {\scriptsize$7$};
      \node[label={below, yshift=-0.3cm:}] at (1.48,4) {\scriptsize$30p$};
      \node[label={below, yshift=0.3cm:}] at (1.7,5) {\small$f_1$};
      \node[label={below, yshift=0.3cm:}] at (0,3.85) {\small$f_3$};
      \node[label={below, yshift=0.3cm:}] at (2.6,2.8) {\small$f_2$};

      \draw(1.25, 3.25) arc(240:120:1);
      \draw(1.25, 4.99) arc(60:-60:1);
      \draw(1.25, 3.25) arc(140:40:1);
      \draw(1.25, 3.25) arc(40:140:1);
      \draw(2.8, 3.25) arc(-40:-140:1);
      \draw(-0.3, 3.25) arc(-140:-40:1);

      \draw[->, thick] (1.28,4.95) -- (1.29,4.95);
      \draw[->, thick] (2.75,3.22) -- (2.75,3.21);
      \draw[->, thick] (-0.25,3.26) -- (-0.25,3.27);
      \end{scope}
\end{tikzpicture}
\end{center}

First, there is no strict ascending loop or strict virtually ascending loop in the above labeled graph. Next, we note that we can only slide $\overline{f}_1$. In addition, $(f_3,f_2)$ is a strict $\overline{f}_1$-integer cycle with modulus 2. Thus $\overline{f}_1$ is the only mobile edge in the above labeled graph. Now it suffices to show that there is no strict monotone cycle with the last edge $f_1$. This is true because there is no 2-tuple of integers $(\alpha_2,\alpha_3)$ that satisfies $7|30p\cdot(\frac{4}{5})^{\alpha_2}(\frac{5}{2})^{\alpha_3}$.

\end{example}

\section{Non-ascending GBS groups}
In this section, we will work out some methods to rearrange slide moves in a non-ascending deformation space. In particular, we will show that any two slide moves of mobile edges of a $n$-rose graph with $n\leq 3$ in a non-ascending deformation space \textit{commute}, in the sense that the slides can be performed in the opposite order after possibly modifying the slide paths. In order to prove this, our idea is to study several slide equivalences and then use the proof of Proposition 3.19 in \cite{MM08}, which shows that slides of mobile edges commute with slides of non-mobile edges. We observe that, if we restrict to non-ascending $n$-rose GBS groups with $n\leq 3$, our slide equivalences either can not happen or they happen the same as slide equivalences in \cite{MM08}. Thus the proof of Proposition 3.19 in \cite{MM08} can be applied in our case. For simplicity, we only consider slide moves in $\mbox{RLG}$($G^{+}$).

\begin{notation}
If $\Gamma\in$ RLG($G$), $e,f\in E(\Gamma)$ and $A$ (resp. $B$) is an $e$ (resp. $f$)-edge path, we will use the notation $e/A\cdot f/B$ to denote the slide moves of $e$ over $A$ followed by sliding $f$ over $B$. In addition, we let $\emptyset$ indicate that the slide move cannot happen.
\end{notation}

\begin{definition}\cite[Definition 3.20]{MM08}
After applying slide moves, we sometimes implement a \textit{renaming} step. This means that if we have a sequence of slide moves, the subsequence after the renaming step should be renamed. For example, suppose we have a sequence of slide moves of the form $e/A\cdot \overline{f}/B\cdot \overline{e}/f$. Further, suppose we have a renaming step $e\mapsto f, f\mapsto \overline{e}$ after the first slide move. Then that sequence will turn into $e/A\cdot e/B\cdot \overline{f}/\overline{e}$.
\end{definition}

\begin{lemma}\label{lemma: slide relation}
Let $\Gamma\in$ RLG($G^+$) representing a non-ascending GBS group $G$. Suppose that $e$ and $f$ are two distinct mobile edges in $\Gamma$. Then we have the following slide equivalences:
\begin{itemize}[topsep=0pt, itemsep=1ex]
     \item[(a)] $e/A\cdot f/B=f/B\cdot e/A$
     \item[(b)] $e/f\cdot f/B=f/B\cdot e/Bf$
     \item[(c)] $e/f\cdot \overline{f}/B=\overline{f}/B\cdot e/f\overline{B}$
     \item[(d)] $e/A\cdot f/e=f/\overline{A}e\cdot e/A$
     \item[(e)] $e/A\cdot f/\overline{e}=f/\overline{e}A\cdot e/A$
     \item[(f)] $e/f\cdot f/e=\emptyset$
     \item[(g)] $e/f\cdot f/\overline{e}=\emptyset$
     \item[(h)] $e/f\cdot \overline{f}/\overline{e}=\emptyset$
     \item[(i)] $e/f\cdot \overline{f}/e=f/e$, then rename $e\mapsto \overline{f}, f\mapsto e$
     \item[(j)] $e/C\cdot \overline{e}/D=\overline{e}/D\cdot e/C$
\end{itemize}
where $f,\overline{f}, e,\overline{e}\not\in A, B$.
\end{lemma}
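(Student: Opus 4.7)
The proof proceeds by direct case-by-case verification of each of the ten identities. The common ingredient is the slide formula: a move $e/A$ sends $\lambda(e) \mapsto \lambda(e) q(A)$ and relocates $o(e)$ from $o(A)$ to $t(A)$, fixing all other labels and origins. I will read both sides of each equation as sequences of label updates and vertex relocations and check that the end states agree.

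Cases (a), (b), (c), (d), (e), and (j) are pure computations. In (a), the hypothesis $e,\bar e \notin B$ and $f,\bar f \notin A$ means that the two slides touch disjoint pieces of data, so commutativity is immediate. In (b)--(e), the shared edges cause the paths on the right-hand side to be modified; the verification reduces to a cancellation of the form
\[
q(Bf) \;=\; q(B) \cdot q(f)_{\mathrm{new}} \;=\; q(B) \cdot \frac{\lambda(\bar f)}{\lambda(f)\,q(B)} \;=\; q(f)_{\mathrm{old}},
\]
which shows that the net change in the label of $e$ on the right side matches the left side, with analogous checks for the endpoint relocations. Case (j) is the independence of the two endpoints of a single geometric edge: $e/C$ affects only $\lambda(e)$ and $o(e)$, while $\bar e/D$ affects only $\lambda(\bar e)$ and $t(e)$, so the two moves commute trivially.

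Cases (f), (g), (h) are the obstructions, and here the non-ascending hypothesis on $G$ is essential. The requirement that both slides be simultaneously valid forces the underlying graph to close up into a short loop involving $e$ and $f$; combined with the divisibility condition $\lambda(f)\mid\lambda(e)$ coming from the first slide and the divisibility required for the second, I extract a strict monotone cycle whose final edge lies in $\{e,\bar e,f,\bar f\}$. Since both $e$ and $f$ are mobile and $G$ is non-ascending, Remark \ref{remark: non-sacending} rules this out, delivering the required contradiction. Case (i) is the boundary situation in which the divisibility conditions are tight; both sides then produce identical local configurations on the four edges $e,\bar e,f,\bar f$, but with the labels permuted in exactly the way prescribed by the renaming $e\mapsto\bar f,\ f\mapsto e$, which I verify by comparing labels and incidences on both sides.

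The main obstacle will be the impossibility cases (f)--(h): the bookkeeping required to locate the strict monotone cycle in the forced loop configuration, and to argue that it is genuinely strict rather than a redundant cycle in the sense of Remark \ref{remark: redundant cycles}, is the most delicate part. The commutation identities themselves are routine, if lengthy, verifications.
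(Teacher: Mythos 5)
Your overall strategy matches the paper's: direct verification for (a)--(e) and (j), a contradiction with non-ascendance for (f)--(h), and a tightness-plus-renaming analysis for (i). There is, however, a genuine gap in the impossibility cases. You claim that the divisibility conditions coming from the two successive slides already produce a \emph{strict} monotone cycle. That fails in the boundary case. Take (f): for $e/f$ to be valid one needs $\lambda(e)=l\lambda(f)$, and for $f/e$ to be valid in the intermediate graph one needs $\lambda(f)=ml\lambda(\overline{f})$; thus the loop $f$ satisfies $\lambda(\overline{f})\mid\lambda(f)$ with quotient $ml$. When $ml\neq 1$ this is a strict (virtually) ascending loop and you get your contradiction, but when $ml=1$ you only obtain $\lambda(f)=\lambda(\overline{f})$, a monotone cycle of modulus $1$, which is not strict and contradicts nothing. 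The paper closes this case using the second assertion of Remark \ref{remark: non-sacending}: since $f$ is mobile and $G$ is non-ascending, there is a strict $f$- or $\overline{f}$-integer cycle $\gamma$ elsewhere in the graph, and sliding the loop $f$ over $\gamma$ turns it into a strict virtually ascending loop, whence an $\mathscr{A}$-move yields a strict ascending loop and the contradiction. Your proposal never performs this extra slide, and your stated worry about redundant cycles in the sense of Remark \ref{remark: redundant cycles} points at the wrong obstruction: the problem is not a redundant subcycle of a slide path but the modulus-$1$ loop itself, and it is the mobility of $f$ that removes it. The same repair is needed in (g) and (h), where the tight cases arise identically. This is also exactly where the hypothesis that the edges are \emph{mobile} is consumed; the divisibility bookkeeping alone cannot exclude the tight case.
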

\begin{proof}
Note that both $A$ and $B$ don't contain $e,\overline{e},f$ or $\overline{f}$, so case (a) is valid. Similarly, since both $C$ and $D$ don't contain $e$ or $\overline{e}$, then case (j) is also valid. For case (b), (c), (d) and (e), it is straightforward to check that each pair of slide moves results in the same labeled graph.

To see that case (f) is not valid, we have $f$ is a loop at $o(e)$. For simplicity, we let $k=\lambda(e)$, $k'=\lambda(\overline{e})$, $s=\lambda(f)$ and $s'=\lambda(\overline{f})$. Thus we have the following diagram:
\begin{center}
\begin{tikzpicture}
\begin{scope}[decoration={
    markings,
    mark=at position 0.5 with {\arrow[scale=1.5]{>}}}
    ]
      \node[label={below, yshift=-0.3cm:}] at (0.6,3.15) {\small$s$};
      \node[label={below, yshift=-0.3cm:}] at (1.9,3.15) {\small$s'$};
      \node[label={left, yshift=0.3cm:}] at (1.45,3.5) {\small$k$};
      \node[label={left, yshift=0.3cm:}] at (1.5,4.15) {\small$k'$};
      \node[label={below, yshift=0.3cm:}] at (3.5,3.5) {\small$e/f$};
      \node[label={below, yshift=-0.3cm:}] at (4.95,3.15) {\small$s$};
      \node[label={below, yshift=-0.3cm:}] at (6.15,3.15) {\small$s'$};
      \node[label={left, yshift=0.3cm:}] at (5.8,3.5) {\small$ls'$};
      \node[label={left, yshift=0.3cm:}] at (5.75,4.15) {\small$k'$};
      \node[label={below, yshift=0.3cm:}] at (7.5,3.5) {\small$f/e$};
      \node[label={left, yshift=0.3cm:}] at (9.6,4.15) {\small$mk'$};
      \node[label={left, yshift=0.3cm:}] at (9.6,2.35) {\small$s'$};
      \node[label={left, yshift=0.3cm:}] at (10.3,3.75) {\small$k'$};
      \node[label={left, yshift=0.3cm:}] at (10.3,2.75) {\small$ls'$};
      
      \tikzset{enclosed/.style={draw, circle, inner sep=0pt, minimum size=.1cm, fill=black}}
      
      \node[enclosed, label={right, yshift=.2cm:}] at (1.25,3.25) {};
      \node[enclosed, label={right, yshift=-.2cm:}] at (1.25,4.25) {};
      \node[enclosed, label={right, yshift=.2cm:}] at (5.5,3.25) {};
      \node[enclosed, label={right, yshift=-.2cm:}] at (5.5,4.25) {};
      \node[enclosed, label={right, yshift=.2cm:}] at (10,4) {};
      \node[enclosed, label={right, yshift=.2cm:}] at (10,2.5) {};

      \draw(1.25,3.25) -- (1.25, 4.25) node[midway, right] {};
      \draw(1.75, 2.75) arc(0:360:0.5);
      \draw(5.5,3.25) -- (5.5, 4.25) node[midway, right] {};
      \draw(6, 2.75) arc(0:360:0.5);
      \draw(10,2.5) -- (10, 4) node[midway, right] {};
      \draw(10, 4) arc(90:270:0.75);

      \draw[->, thick] (3, 3.25) -- (4, 3.25);
      \draw[->, thick] (7, 3.25) -- (8, 3.25);

      \end{scope}
\end{tikzpicture}
\end{center}
Let $\Gamma'$ denote the middle labeled graph in the above diagram. If $e$ can slide over $f$ in $\Gamma$, then $k=ls$ for some $l\in \mathbb{Z}_{>0}$. If $f$ can then slide over $e$ in $\Gamma'$, then we have $mls'=s$ for some $m\in \mathbb{Z}_{>0}$. If $ml\neq 1$, then $f$ is a strict virtually ascending loop in $\Gamma'$. After an $\mathscr{A}$-move, $f$ will become a strict ascending loop. This contradicts with the fact that $G$ is non-ascending. On the other hand, if $ml=1$, then $s'=s$. Since $f$ is a mobile edge, by Remark \ref{remark: non-sacending} there exists either a strict $f$-integer cycle or a strict $\overline{f}$-integer cycle in $\Gamma'$. We consider the case that there exists a strict $f$-integer cycle $\gamma$, the latter case can be addressed by similar reasoning. Now we can turn $f$ into a strict virtually ascending loop by sliding $f$ over $\gamma$. Therefore, after an $\mathscr{A}$-move, $f$ will become a strict ascending loop, which is a contradiction.

For case (g), we have $e$ is a loop at $o(f)$. Thus we have the following diagram:
\begin{center}
\begin{tikzpicture}
\begin{scope}[decoration={
    markings,
    mark=at position 0.5 with {\arrow[scale=1.5]{>}}}
    ]
      \node[label={below, yshift=-0.3cm:}] at (0.2,3.05) {\small$s$};
      \node[label={below, yshift=-0.3cm:}] at (1.8,3.05) {\small$s'$};
      \node[label={left, yshift=0.3cm:}] at (-0.05,2.75) {\small$k$};
      \node[label={left, yshift=0.3cm:}] at (0,3.8) {\small$k'$};
      \node[label={below, yshift=0.3cm:}] at (3.5,3.5) {\small$e/f$};
      \node[label={below, yshift=-0.3cm:}] at (4.95,3.05) {\small$s$};
      \node[label={below, yshift=-0.3cm:}] at (6.55,3.05) {\small$s'$};
      \node[label={left, yshift=0.3cm:}] at (7,3.75) {\small$ls'$};
      \node[label={left, yshift=0.3cm:}] at (4.6,3.75) {\small$k'$};
      \node[label={below, yshift=0.3cm:}] at (8.25,3.5) {\small$f/\overline{e}$};
      \node[label={left, yshift=0.3cm:}] at (10.9,3.35) {\small$mls'$};
      \node[label={left, yshift=0.3cm:}] at (12,3.35) {\small$s'$};
      \node[label={left, yshift=0.3cm:}] at (9.4,3.75) {\small$k'$};
      \node[label={left, yshift=0.3cm:}] at (11.8,3.75) {\small$ls'$};
      
      \tikzset{enclosed/.style={draw, circle, inner sep=0pt, minimum size=.1cm, fill=black}}
      
      \node[enclosed, label={right, yshift=.2cm:}] at (0,3.25) {};
      \node[enclosed, label={right, yshift=-.2cm:}] at (2,3.25) {};
      \node[enclosed, label={right, yshift=.2cm:}] at (4.75,3.25) {};
      \node[enclosed, label={right, yshift=-.2cm:}] at (6.75,3.25) {};

      \node[enclosed, label={right, yshift=.2cm:}] at (9.5,3.25) {};
      \node[enclosed, label={right, yshift=-.2cm:}] at (11.5,3.25) {};

      \draw(0,3.25) -- (2, 3.25) node[midway, right] {};
      \draw(0, 3.25) arc(0:360:0.5);
      \draw(4.75, 3.25) arc(180:0:1);
      \draw(4.75,3.25) -- (6.75, 3.25);
      \draw(9.5, 3.25) arc(180:0:1);
      \draw(12, 2.75) arc(360:0:0.5);

      \draw[->, thick] (3, 3.25) -- (4, 3.25);
      \draw[->, thick] (7.75, 3.25) -- (8.75, 3.25);

      \end{scope}
\end{tikzpicture}
\end{center}
The argument is similar to case (f). In other words, we can always turn $f$ in the last labeled graph above into a strict ascending loop, which is a contradiction.

For case (h), we have the following diagram:
\begin{center}
\begin{tikzpicture}
\begin{scope}[decoration={
    markings,
    mark=at position 0.5 with {\arrow[scale=1.5]{>}}}
    ]
      \node[label={below, yshift=-0.3cm:}] at (0.2,3.05) {\small$s$};
      \node[label={below, yshift=-0.3cm:}] at (1.8,3.05) {\small$s'$};
      \node[label={left, yshift=0.3cm:}] at (2.1,3.8) {\small$k'$};
      \node[label={left, yshift=0.3cm:}] at (0,3.8) {\small$k$};
      \node[label={below, yshift=0.3cm:}] at (3.5,3.5) {\small$e/f$};
      \node[label={below, yshift=-0.3cm:}] at (4.95,3.05) {\small$s$};
      \node[label={below, yshift=-0.3cm:}] at (6.55,3.05) {\small$s'$};
      \node[label={left, yshift=0.3cm:}] at (6.9,3.85) {\small$ls'$};
      \node[label={left, yshift=0.3cm:}] at (6.9,2.65) {\small$k'$};
      \node[label={below, yshift=0.3cm:}] at (9.25,3.5) {\small$\overline{f}/\overline{e}$};
      \node[label={left, yshift=0.3cm:}] at (10.8,3.05) {\small$s$};
      \node[label={left, yshift=0.3cm:}] at (12,3.05) {\small$mls'$};
      \node[label={left, yshift=0.3cm:}] at (12.5,2.65) {\small$k'$};
      \node[label={left, yshift=0.3cm:}] at (12.5,3.85) {\small$ls'$};
      
      \tikzset{enclosed/.style={draw, circle, inner sep=0pt, minimum size=.1cm, fill=black}}
      
      \node[enclosed, label={right, yshift=.2cm:}] at (0,3.25) {};
      \node[enclosed, label={right, yshift=-.2cm:}] at (2,3.25) {};
      \node[enclosed, label={right, yshift=.2cm:}] at (4.75,3.25) {};
      \node[enclosed, label={right, yshift=-.2cm:}] at (6.75,3.25) {};

      \node[enclosed, label={right, yshift=.2cm:}] at (10.5,3.25) {};
      \node[enclosed, label={right, yshift=-.2cm:}] at (12.5,3.25) {};

      \draw(0,3.25) -- (2, 3.25) node[midway, right] {};
      \draw(0, 3.25) arc(180:0:1);
      \draw (7.75,3.25) arc (0:360:0.5); 
      \draw(4.75,3.25) -- (6.75, 3.25);
      \draw(10.5,3.25) -- (12.5, 3.25) node[midway, right] {};
      \draw(13.5, 3.25) arc(360:0:0.5);

      \draw[->, thick] (3, 3.25) -- (4, 3.25);
      \draw[->, thick] (8.75, 3.25) -- (9.75, 3.25);

      \end{scope}
\end{tikzpicture}
\end{center}
Let $\Gamma'$ (resp. $\Gamma''$) denote the middle (resp. last) labeled graph in the above diagram. If $e$ can slide over $f$ in $\Gamma$, then we have $k=ls$ for some $l\in \mathbb{Z}_{>0}$. If $\overline{f}$ can then slide over $\overline{e}$ in $\Gamma'$, then we have $s'=mk'$ for some $m\in \mathbb{Z}_{>0}$. This implies that $\lambda''(e)=mlk'$. Similar to case (f), we can always turn the edge $e$ in $\Gamma''$ into a strict ascending loop, which is a contradiction.

For case (i), we have the following diagram:
\begin{center}
\begin{tikzpicture}
\begin{scope}[decoration={
    markings,
    mark=at position 0.5 with {\arrow[scale=1.5]{>}}}
    ]
      \node[label={below, yshift=-0.3cm:}] at (0.2,3.05) {\small$s$};
      \node[label={below, yshift=-0.3cm:}] at (1.8,3.05) {\small$s'$};
      \node[label={left, yshift=0.3cm:}] at (0.7,4.5) {\small$k'$};
      \node[label={left, yshift=0.3cm:}] at (0,3.7) {\small$k$};
      \node[label={below, yshift=0.3cm:}] at (3.5,4) {\small$e/f$};
      \node[label={below, yshift=-0.3cm:}] at (4.95,3.05) {\small$s$};
      \node[label={below, yshift=-0.3cm:}] at (6.55,3.05) {\small$s'$};
      \node[label={left, yshift=0.3cm:}] at (6.9,3.75) {\small$ls'$};
      \node[label={left, yshift=0.3cm:}] at (6.1,4.5) {\small$k'$};
      \node[label={below, yshift=0.3cm:}] at (8.25,4) {\small$\overline{f}/e$};
      \node[label={left, yshift=0.3cm:}] at (9.5,3.7) {\small$s$};
      \node[label={left, yshift=0.3cm:}] at (10,4.5) {\small$k'$};
      \node[label={left, yshift=0.3cm:}] at (10.9,4.5) {\small$k'$};
      \node[label={left, yshift=0.3cm:}] at (11.5,3.75) {\small$ls'$};
      
      \tikzset{enclosed/.style={draw, circle, inner sep=0pt, minimum size=.1cm, fill=black}}
      
      \node[enclosed, label={right, yshift=.2cm:}] at (0,3.25) {};
      \node[enclosed, label={right, yshift=-.2cm:}] at (2,3.25) {};
      \node[enclosed, label={right, yshift=.2cm:}] at (1,4.5) {};
      \node[enclosed, label={right, yshift=.2cm:}] at (4.75,3.25) {};
      \node[enclosed, label={right, yshift=.2cm:}] at (5.75,4.5) {};
      \node[enclosed, label={right, yshift=-.2cm:}] at (6.75,3.25) {};

      \node[enclosed, label={right, yshift=.2cm:}] at (9.5,3.25) {};
      \node[enclosed, label={right, yshift=.2cm:}] at (10.5,4.5) {};
      \node[enclosed, label={right, yshift=-.2cm:}] at (11.5,3.25) {};

      \draw(0,3.25) -- (2, 3.25) node[midway, right] {};
      \draw(0,3.25) -- (1, 4.5) node[midway, right] {};
      \draw(5.75,4.5) -- (6.75, 3.25) node[midway, right] {};
      \draw(4.75,3.25) -- (6.75, 3.25);
      \draw(9.5,3.25) -- (10.5, 4.5) node[midway, right] {};
      \draw(11.5,3.25) -- (10.5, 4.5) node[midway, right] {};

      \draw[->, thick] (3, 3.75) -- (4, 3.75);
      \draw[->, thick] (7.75, 3.75) -- (8.75, 3.75);

      \end{scope}
\end{tikzpicture}
\end{center}
Let $\Gamma'$ (resp. $\Gamma''$) denote the middle (resp. last) labeled graph in the above diagram. If $e$ can slide over $f$ in $\Gamma$, we have $k=ls$ and $\lambda'(e)=ls'$. If $\overline{f}$ can then slide over $e$ in $\Gamma'$, we have $ls'|s'$, which implies that $l=1$. Thus we have $k=s$ and $\lambda''(\overline{f})=k'$. The result of these two moves can also be achieved by sliding $f$ over $e$ in $\Gamma$, with the only difference being that, after $f/e$ in $\Gamma$, $f$ is in the position of $\overline{e}$ in $\Gamma''$, so later references to $\overline{e}$ should be renamed as $f$. Similarly, references to $f$ should be renamed as $e$.
\end{proof}

\begin{lemma}\label{lemma: sequence commute}
Let $\Gamma\in$ RLG($G^+$) representing a non-ascending GBS group $G$. Suppose that $e$ and $f$ are two distinct mobile edges in $\Gamma$. Then we have the following slide equivalences:
\begin{itemize}[topsep=0pt, itemsep=1ex]
     \item[(k)] $e/Af A'\cdot f/B=f/B\cdot e/ABfA'$
     \item[(l)] $e/A\overline{f}A'\cdot f/B=f/B\cdot e/A\overline{f}\ \overline{B}A'$
     \item[(m)] $e/A\cdot f/BeB'=f/B\overline{A}eB'\cdot e/A$
     \item[(n)] $e/A\cdot f/B\overline{e}B'=f/B\overline{e}AB'\cdot e/A$
     \item[(o)] $e/AfA'\cdot f/BeB'=\emptyset$
     \item[(p)] $e/AfA'\cdot f/B\overline{e}B'=\emptyset$
     \item[(q)] $e/A\overline{f}A'\cdot f/B\overline{e}B'=\emptyset$
\end{itemize}
where $f,\overline{f}, e,\overline{e}\not\in A, B$.
\end{lemma}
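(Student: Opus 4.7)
The plan is to reduce each of (k)--(q) to the atomic slide equivalences already established in Lemma \ref{lemma: slide relation} by decomposing the long slide paths at the unique occurrence of the ``obstructing'' edge and then commuting repeatedly. To prove (k), I would split $e/AfA' = e/A \cdot e/f \cdot e/A'$ and $f/B$ remains as is. Since $A,A'$ contain none of $e,\overline{e},f,\overline{f}$, the commutations $e/A\cdot f/B = f/B\cdot e/A$ and $e/A'\cdot f/B = f/B\cdot e/A'$ are direct applications of (a); the middle step $e/f\cdot f/B = f/B\cdot e/Bf$ is exactly (b). Concatenating these rewrites produces $f/B\cdot e/A\cdot e/Bf\cdot e/A' = f/B\cdot e/(ABfA')$, which is (k). The proofs of (l), (m), (n) follow the same template: decompose at the unique $\overline{f}$, $e$, or $\overline{e}$ inside the relevant path, use (a) on the outer pieces, and invoke (c), (d), or (e), respectively, on the central atomic commutation.

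For the impossibility claims (o), (p), (q), the strategy is to push the forbidden atomic pattern together by applying the same commutations, and then invoke (f), (g), or (h). For (o), begin with
\[
e/A\cdot e/f\cdot e/A'\cdot f/B\cdot f/e\cdot f/B'.
\]
Apply (a) to commute $e/A'$ past $f/B$, then (d) to commute $e/A'$ past $f/e$, which replaces the pair $e/A'\cdot f/e$ by $f/(\overline{A'}e)\cdot e/A'$. Absorbing consecutive $f$-slides produces an interior subword of the form $e/f\cdot f/\overline{A'}\cdot f/e$. Then (b) commutes $e/f$ past $f/\overline{A'}$, yielding $f/\overline{A'}\cdot e/(\overline{A'}f)\cdot f/e$. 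Splitting $e/(\overline{A'}f) = e/\overline{A'}\cdot e/f$ and using (a) to commute $e/\overline{A'}$ past $f/e$ via (d) once more isolates an adjacent $e/f\cdot f/e$, which is forbidden by (f). The proofs of (p) and (q) are parallel, ending in an adjacent $e/f\cdot f/\overline{e}$ or $e/f\cdot \overline{f}/\overline{e}$ and invoking (g) or (h) respectively.

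The main obstacle is bookkeeping: each intermediate rewrite alters one of the slide paths, and I must verify at every step that the newly produced path remains a valid edge path (so (a)--(e) truly apply), that no fresh occurrence of $e,\overline{e},f,\overline{f}$ sneaks into an ``outer'' subpath where it would break the hypothesis of the atomic case being invoked, and that the labels $\lambda(e)$ and $\lambda(f)$ tracked through the rewrite sequence agree with the values required at the final application of (f), (g), or (h). Mobility of $e$ and $f$ is preserved throughout by Corollary 3.15 of \cite{MM08}, and the non-ascending hypothesis propagates because we are working inside $\mathrm{RLG}(G^+)$, so the invocation of (f)--(h) at the end of (o)--(q) is legitimate.
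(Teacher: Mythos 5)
Your proposal is correct and follows essentially the same route as the paper: decompose each composite slide path at the occurrence of $f$, $\overline{f}$, $e$, or $\overline{e}$, commute the outer pieces with (a), handle the central step with (b)--(e), and for (o)--(q) push the rewrites until a forbidden adjacent pair $e/f\cdot f/e$, $e/f\cdot f/\overline{e}$, or $e/\overline{f}\cdot f/\overline{e}$ appears, contradicting (f), (g), or (h). The paper carries out exactly these formal manipulations (citing, e.g., ``(b) and (d)'' for case (o)), so your argument matches its proof.
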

\begin{proof}
We will use Lemma \ref{lemma: slide relation} to prove each case individually.

For case (k), we have
\begin{align*}
e/AfA' \cdot f/B= &\ e/A\cdot e/f \cdot f/B \cdot e/A'\ & \mbox{by\ (a)}\\
= &\ e/A\cdot f/B \cdot e/Bf \cdot e/A'\ & \mbox{by\ (b)}\\
 =&\ f/B\cdot e/ABfA'\ & \mbox{by\ (a)}
\end{align*}

We will skip illustrations of case (l), case (m) and case (n) because these cases are very similar to case (k).

For case (o), we have
\begin{align*}
e/AfA' \cdot f/BeB'= &\ e/A\cdot e/f \cdot f/B \cdot e/A' \cdot f/e\cdot f/B'\ & \mbox{by\ (a)}\\
 =&\ e/A\cdot f/B\cdot e/Bf \cdot f/\overline{A'}e \cdot e/A'\cdot f/B'\ & \mbox{by\ (b) and (d)}\\
 =&\ f/B\cdot e/A\cdot e/B\cdot e/f \cdot f/\overline{A'}\cdot f/e \cdot f/B'\cdot e/A'\ & \mbox{by\ (a)}\\
 =&\ f/B\cdot e/AB \cdot f/\overline{A'}\cdot e/\overline{A'}\cdot e/f\cdot f/e \cdot f/B'\cdot e/A'\ & \mbox{by\ (b)}\\
 =&\ \emptyset\ &\mbox{by\ (f)}
\end{align*}

For case (p), we have 
\begin{align*}
e/AfA' \cdot f/B\overline{e}B'= &\ e/A\cdot e/f\cdot f/B \cdot e/A' \cdot f/\overline{e} \cdot f/B'\ & \mbox{by\ (a)}\\
= &\ e/A\cdot f/B\cdot e/B\cdot e/f\cdot f/\overline{e} \cdot f/A' \cdot e/A' \cdot f/B'\ & \mbox{by\ (b) and (e)}\\
 =&\ \emptyset  & \mbox{by\ (g)}
\end{align*}

For case (q), we have
\begin{align*}
e/A\overline{f}A'\cdot f/B\overline{e}B'= &\ e/A\cdot e/\overline{f}\cdot f/B \cdot e/A'\cdot f/\overline{e} \cdot f/B' \ & \mbox{by\ (a)}\\
= &\ e/A\cdot f/B\cdot e/\overline{f}\ \overline{B}\cdot f/\overline{e}A' \cdot e/A' \cdot f/B' \ & \mbox{by\ (c) and (e)}\\
= &\ f/B\cdot e/A \cdot e/\overline{f}\cdot e/\overline{B}\cdot f/\overline{e}\cdot f/A'\cdot f/B' \cdot e/A' \ & \mbox{by\ (a)}\\
= &\ f/B\cdot e/A \cdot e/\overline{f}\cdot f/\overline{e}\cdot f/\overline{B}\cdot e/\overline{B}\cdot f/A'\cdot f/B' \cdot e/A' \ & \mbox{by\ (e)}\\
 =&\ \emptyset\  & \mbox{by\ (h)}
\end{align*}
\end{proof}

\begin{lemma}\label{lemma: slides in two graphs}
Let $\Gamma\in$ RLG($G^+$) be a $n$-rose graph representing a non-ascending GBS group $G$ with $n\leq 3$. Suppose $e,f$ are two distinct mobile edges in $\Gamma$. Then we have the following slide equivalences:
\begin{itemize}[topsep=0pt, itemsep=1ex]
\item[(r)] 
$e/A\overline{f}A'\cdot f/BeB'=
\begin{cases}
\emptyset & \mbox{if}\ n=1 \\
f/A'\cdot \overline{f}/\overline{A}e\cdot e/A\cdot \overline{e}/B', \ \mbox{rename}\ e\mapsto f, f\mapsto \overline{e} & \mbox{if}\ 1<n\leq 3 
\end{cases} $

\end{itemize}
where $f,\overline{f}, e,\overline{e}\not\in A, A', B, B'$.

\end{lemma}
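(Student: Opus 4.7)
I treat the two ranges of $n$ separately.

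\emph{The case $n = 1$.} A $1$-rose graph has a single geometric loop with oriented edges $\{e, \overline{e}\}$, so any distinct mobile edge $f$ would have to equal $\overline{e}$. But then the slide path $BeB'$ of $f = \overline{e}$ contains $e$, which is not permitted for a slide of $\overline{e}$. Hence the sequence $e/A\overline{f}A' \cdot f/BeB'$ cannot be performed and the claim $= \emptyset$ holds vacuously.

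\emph{The case $1 < n \leq 3$.} My plan is to mimic the proofs of cases (o)--(q) in Lemma \ref{lemma: sequence commute}: expand the left-hand side as a string of atomic slides, apply the commuting relations (a)--(e) of Lemma \ref{lemma: slide relation} to rearrange them so that a pivotal atomic pair appears in the middle, and then invoke one of (f)--(i). Here the pivotal pair will match the left-hand side of (i) after substituting $f \mapsto \overline{f}$; the substituted (i) yields a single residual slide $\overline{f}/e$ together with the renaming $e \mapsto f,\ f \mapsto \overline{e}$, which is precisely the renaming that appears in the statement.

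Concretely, I would begin with
$$e/A\overline{f}A' \cdot f/BeB' \;=\; e/A \cdot e/\overline{f} \cdot e/A' \cdot f/B \cdot f/e \cdot f/B',$$
then use (a) to commute $e/A'$ past $f/B$, apply (c) (with $f$ replaced by $\overline{f}$) to commute $e/\overline{f}$ past $f/B$, and use (a) again to move $e/A$ past $f/B$; on the other side, use (d) to commute $e/A'$ past $f/e$ and (a) to migrate the surviving $e$-slides past $f/B'$. This rearrangement isolates a central $e/\overline{f} \cdot f/e$, which I would collapse using the substituted (i). Propagating the renaming through the flanking atomic slides and reassembling the concatenated slide paths should produce $f/A' \cdot \overline{f}/\overline{A}e \cdot e/A \cdot \overline{e}/B'$.

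\emph{The main obstacle.} In contrast to (o)--(q) of Lemma \ref{lemma: sequence commute}, where the pivotal step simply terminates in $\emptyset$, the present case produces a residual atomic slide together with a nontrivial renaming, and the bookkeeping becomes significantly more delicate. The key technical difficulty is tracking how the paths $A, A', B, B'$ and their reverses accumulate around the residual slide and, after applying the renaming $e \mapsto f,\ f \mapsto \overline{e}$, reorganize exactly into the four slides $f/A',\ \overline{f}/\overline{A}e,\ e/A,\ \overline{e}/B'$ of the right-hand side. Verifying that the reversed path $\overline{A}$ followed by $e$ emerges as the slide path for $\overline{f}$ in the renamed configuration is the most subtle step, and the hypothesis $1 < n \leq 3$ is used here to ensure that the intermediate graphs remain $n$-rose and that all invoked slide equivalences from Lemma \ref{lemma: slide relation} are available in the non-ascending deformation space.
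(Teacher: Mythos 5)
Your $n=1$ case is fine, and for $n=2$ your plan works trivially because $A,A',B,B'$ are forced to be empty (there is no third petal to slide over), so the left side is literally $e/\overline{f}\cdot f/e$ and the substituted relation (i) applies at once. The gap is in the $n=3$ case, and it is exactly the step you flag as ``the most subtle'' but do not resolve. After expanding and applying (a) and the substituted (c), you reach something like $e/A\cdot f/B\cdot e/\overline{f}\cdot e/\overline{B}A'\cdot f/e\cdot f/B'$; to isolate the pivotal pair $e/\overline{f}\cdot f/e$ you must push $e/\overline{B}A'$ past $f/e$, but relation (d) turns $f/e$ into $f/\overline{A'}Be$ in the process. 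Relation (i) applies only to the bare pair $e/\overline{f}\cdot f/e$, so your argument closes only if the residue $\overline{A'}B$ (equivalently $B\overline{A'}$) is a redundant cycle, i.e.\ only if $A'=B$ as slide paths. Nothing in your proposal establishes this, and it is not a bookkeeping matter: it is the mathematical heart of the case.

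The paper supplies precisely this missing step. Since $e,\overline{e},f,\overline{f}\notin A',B$ and $n=3$, both $A'$ and $B$ are powers of the single remaining petal $g$ or its reverse (this is the real use of $n\leq 3$, not the availability of the slide relations). The validity of the two given slides yields $\lambda(f)\mid\lambda(e)q(A\overline{f})$ and $\lambda(e)q(A\overline{f}A')\mid\lambda(f)q(B)$, whence $\lambda(f)\mid\lambda(f)q(B\overline{A'})$ and so $q(B\overline{A'})=q(g)^{k}\in\mathbb{Z}$. If $k\neq 0$ this produces a strict integer cycle on $g$, contradicting that $G$ is non-ascending (or forces $q(g)=1$, excluded by the no-redundant-subcycle convention of Remark \ref{remark: redundant cycles}); hence $k=0$ and $A'=B$. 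Only then does the chain of relations (a), (c), cancellation, (i), (a), (d) produce the claimed right-hand side. Your attribution of the hypotheses $1<n\leq 3$ and non-ascending to ``keeping intermediate graphs rose-shaped'' misses where they actually do the work, so as written the proposal does not constitute a proof of the $n=3$ subcase.
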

\begin{proof}
If $n=1$, we only have one geometric edge in $\Gamma$. Thus the above slide relation is $\emptyset$. If $n=2$, since the edge paths $A,A',B,B'$ don't contain $e,\overline{e},f,\overline{f}$, then we have $A,A',B,B'$ are empty edge paths. Thus the above slide relation becomes
$$e/\overline{f}\cdot f/e=\overline{f}/e,\  \mbox{rename}\ e\mapsto f, f\mapsto \overline{e}$$
This follows directly from case (i) in Lemma \ref{lemma: slide relation}. If $n=3$, $\Gamma$ only contains three geometric edges. We then denote the third one by $g$. Since $e,\overline{e},f,\overline{f}\not\in A', B$, by Remark \ref{remark: redundant cycles}, we have $A', B$ are either $g^{s}=(g,g,...,g)$ or $\overline{g}^{t}=(\overline{g},\overline{g},...,\overline{g})$ for some $s,t\in\mathbb{Z}_{>1}$. It follows from the proof of case (d) of Lemma 3.23 in \cite{MM08} that $q(B\overline{A'})\in \mathbb{Z}$. To keep this proof self-contained, we repeat the reasoning here. If $e$ can slide over $A\overline{f}A'$ in $\Gamma$, then $\lambda(f)|\lambda(e)q(A\overline{f})$. If $f$ can then slide over $Be$, then $\lambda(e)q(A\overline{f}A')|\lambda(f)q(B)$. This implies that, after sliding $f$ over $B$, we can slide it over $\overline{A'}$ and the resulting label is $\lambda(f)q(B\overline{A'})$. In addition, we have $\lambda(e)q(A\overline{f})|\lambda(f)q(B\overline{A'})$. It follows that $\lambda(f)|\lambda(f)q(B\overline{A'})$. Thus we have $q(B\overline{A'})=q(g)^{k}\in \mathbb{Z}$ for some $k\in \mathbb{Z}$. If $k=0$, then we have $A'=B$. This implies that $f/B=f/A'$. Thus the above slide relation becomes 
\begin{align*}
e/A\overline{f}A' \cdot f/A'eB'= &\ e/A\cdot e/\overline{f} \cdot f/A' \cdot e/A' \cdot f/e\cdot f/B'\ & \mbox{by\ (a)}\\
=&\ e/A\cdot f/A'\cdot e/\overline{f}\  \overline{A'} \cdot e/A'\cdot f/e\cdot f/B'\ & \mbox{by\ (c)}\\
=&\ e/A\cdot f/A'\cdot e/\overline{f}\cdot f/e\cdot f/B'\ & \mbox{by\ cancellation}\\
=&\ e/A\cdot f/A'\cdot \overline{f}/e \cdot \overline{e}/B'\ \  \mbox{rename}\ e\mapsto f,f\mapsto\overline{e}\ & \mbox{by\ (i)}\\
=&\ f/A'\cdot e/A\cdot \overline{f}/e \cdot \overline{e}/B'\ \  \mbox{rename}\ e\mapsto f,f\mapsto\overline{e}\ & \mbox{by\ (a)}\\
=&\ f/A'\cdot \overline{f}/\overline{A}e\cdot e/A\cdot \overline{e}/B', \ \mbox{rename}\ e\mapsto f, f\mapsto \overline{e}\ & \mbox{by\ (d)}
\end{align*} If $k\neq 0$, since $G$ is non-ascending, we must have $q(g)=1$. By Remark \ref{remark: redundant cycles}, this can not happen.

\end{proof}

\begin{prop}
\label{prop: e commute}
Let $\Gamma\in$ RLG($G^+$) be a $n$-rose graph representing a non-ascending GBS group $G$ with $n\leq 3$ and $e_1,...,e_k\in E(\Gamma)$ be the mobile edges of $\Gamma$ for some $0\leq k\leq 3$. Suppose $\Gamma'$ is obtained from $\Gamma$ by a sequence of slides of mobile edges. Then there exists a rearrangement of the above sequence
$$\Gamma=\Gamma_0\rightarrow...\rightarrow\Gamma_1\rightarrow...\rightarrow\Gamma_{k-1}\rightarrow...\rightarrow \Gamma_k=\Gamma'$$
such that the subsequences $\Gamma_{j-1}\rightarrow...\rightarrow \Gamma_{j}$ are slides of the geometric edge $e_j, \overline{e}_j$ only for $j=1,...,k$.
\end{prop}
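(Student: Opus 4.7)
The plan is to mimic the bubble-sort argument used in the proof of Proposition 3.19 of \cite{MM08}, replacing its commutation rules (between a mobile slide and a non-mobile slide) with the commutation rules for two distinct mobile slides established in Lemmas \ref{lemma: slide relation}, \ref{lemma: sequence commute}, and \ref{lemma: slides in two graphs}. Proceed by induction on the number of mobile edges $k\in\{0,1,2,3\}$. The cases $k\leq 1$ are immediate: there is at most one mobile geometric edge, so any sequence of slides of mobile edges is already of the required form.

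For the inductive step, the goal is to collect all slides of the geometric edge $e_1,\overline{e}_1$ at the front of the sequence and then apply the inductive hypothesis to the tail. Locate the leftmost adjacent pair $s_i\cdot s_{i+1}$ in which $s_i=e/A$ slides a higher-indexed mobile geometric edge $e\in\{e_j,\overline{e}_j\}$ with $j\geq 2$, and $s_{i+1}=f/B$ slides $f\in\{e_1,\overline{e}_1\}$. Depending on whether $A$ contains edges from $\{f,\overline f\}$ and whether $B$ contains edges from $\{e,\overline e\}$, apply the corresponding equivalence: cases (a)--(e), (i), (j) of Lemma \ref{lemma: slide relation}; cases (k)--(n) of Lemma \ref{lemma: sequence commute}; or case (r) of Lemma \ref{lemma: slides in two graphs}. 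The $\emptyset$ cases (f)--(h) and (o)--(q) never appear inside a valid slide sequence in a non-ascending deformation space, so they do not obstruct the swap. Each application either transposes $s_i$ and $s_{i+1}$ outright or rewrites the pair as a short block in which a slide of $e_1$ or $\overline{e}_1$ now precedes a slide of $e,\overline e$, possibly after a renaming step that permutes the symbols for $e$ and $f$.

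Iterating this local swap strictly decreases the inversion count that records pairs $(a,b)$ with $a<b$, $s_a$ a slide of $\{e_j,\overline{e}_j\}$ for some $j\geq 2$, and $s_b$ a slide of $\{e_1,\overline{e}_1\}$. The procedure therefore terminates in finitely many steps with all $e_1,\overline{e}_1$-slides collected at the front, yielding a rearranged sequence whose initial block is $\Gamma_0\to\cdots\to\Gamma_1$ and whose tail $\Gamma_1\to\cdots\to\Gamma'$ consists entirely of slides of $e_2,\ldots,e_k$. Applying the inductive hypothesis to the tail produces the desired nested decomposition.

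The main obstacle is the bookkeeping forced by cases (i) and (r): after such a swap, every subsequent slide involving the renamed edges must be relabeled before the next comparison, and one must re-identify which of $e_1,\ldots,e_k$ the renamed edge corresponds to, updating the inversion count accordingly. Since renaming only permutes labels attached to mobile geometric edges without changing their number or mobility (by Corollary 3.15 of \cite{MM08}), the iteration may continue and the inversion-count termination argument is unaffected. This is the same combinatorial bookkeeping appearing in the proof of Proposition 3.19 of \cite{MM08}, which can be taken over verbatim once the commutation lemmas above are in hand.
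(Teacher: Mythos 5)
Your strategy is the paper's: reduce everything to the slide equivalences of Lemmas \ref{lemma: slide relation}, \ref{lemma: sequence commute} and \ref{lemma: slides in two graphs}, note that the $\emptyset$ cases cannot occur inside a valid sequence, and then sort as in Proposition 3.19 of \cite{MM08}. The reduction is fine; the gap is in the termination argument you substitute for the one in \cite{MM08}. Your inversion count is not monotone under the rewrites that carry a renaming step. Take case (r) with $e=e_j$ ($j\geq 2$) and $f=e_1$: the adjacent pair $e/A\overline{f}A'\cdot f/BeB'$ is replaced by $f/A'\cdot\overline{f}/\overline{A}e\cdot e/A\cdot\overline{e}/B'$, and the renaming $e\mapsto f$, $f\mapsto\overline{e}$ is applied to the whole remainder of the sequence, so every later slide of $e_j$ becomes a slide of $e_1$ and vice versa. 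If the tail consists of $a$ slides of $e_j$ and none of $e_1$, the pair contributes exactly one inversion beforehand, but afterwards the two newly created slides of $e_j$ precede $a$ slides of $e_1$, contributing $2a$ inversions. The count can therefore strictly increase, and the sentence \enquote{the inversion-count termination argument is unaffected} is exactly where the proof breaks.

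The quantity that does decrease is the complexity used in the proof of Proposition 3.19 of \cite{MM08}: record each slide as $E$, $E_F$, $F$ or $F_E$ according to whether its slide path meets the other edge, delete the plain symbols $E$ and $F$, and count the subwords $E_FF_E$. Case (r) converts one such subword into $F\cdot F_E\cdot E\cdot E$, and the induced renaming interchanges $E_F$ with $F_E$ in the tail in a way this measure tolerates; once the complexity is zero, the remaining cases (a)--(n) suffice to sort the sequence. So either replace your inversion count by that complexity, or do what the paper does and import the termination argument of \cite{MM08} wholesale rather than re-deriving it. With that repair, the rest of your write-up (the case analysis of which equivalence applies to an adjacent pair, and the observation that the null cases would contradict validity of the original sequence) matches the paper's proof.
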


\begin{proof} We will apply the proof of Proposition 3.19 in \cite{MM08}. To keep this exposition self-contained, we start with a rough idea of this proof, and then explain the reason why it can be applied in our case. Let $e,f\in E(\Gamma)$ where $f$ is non-mobile. To simplify the discussion, we denote slides of the form $e/A$ or $\overline{e}/A$ by $E$, and those of the form $e/A fA',e/A\overline{f}A',\overline{e}/A fA'$ or $\overline{e}/A\overline{f}A'$ by $E_F$ where $e,\overline{e},f,\overline{f}\not\in A, A'$. Likewise define the symbols $F$ and $F_E$. Given a valid slide sequence $e/\alpha\cdot f/\beta$, the \textit{complexity} is the number of slides of the form $E_FF_E$ after omitting the symbols $E,F$. If the complexity is 0, then we can apply Lemma 3.21 and Lemma 3.22 in \cite{MM08} to put the sequence into the form $f/\beta'\cdot \overline{f}/\beta''\cdot e/\alpha'\cdot \overline{e}/\alpha''$. If the complexity is nonzero, then we can apply Lemma 3.23 in \cite{MM08} to reduce complexity.

We now explain why the above method can be applied in our case. Let $e_s,e_t\in E(\Gamma)$ be two mobile edges. If $e_s/\alpha\cdot e_t/\beta$ is a valid slide sequence in RLG($G^{+}$), we use Lemma \ref{lemma: slide relation}, Lemma \ref{lemma: sequence commute} and Lemma \ref{lemma: slides in two graphs} to rearrange this slide sequence. Note that our slide equivalences in these three lemmas are either $\emptyset$ or the same as Lemma 3.21, Lemma 3.22 and Lemma 3.23 in \cite{MM08}. If one of the null slide equivalences happens, it will contradict the validity of the original sequence. Therefore, by the same reason as the proof of Proposition 3.19 in \cite{MM08}, we can always put the sequence into $e_t/\beta'\cdot \overline{e_t}/\beta''\cdot e_s/\alpha'\cdot \overline{e_s}/\alpha''$ by applying valid slide equivalences in Lemma \ref{lemma: slide relation}, Lemma \ref{lemma: sequence commute} and Lemma \ref{lemma: slides in two graphs} finitely many times. Thus, by repeated application of this fact, we can obtain the desired sequence.

\end{proof}

\section{The isomorphism problem}
The goal of this section is to prove our second main result, which can be restated as the following:
\begin{theorem}\label{theorem: main result 3} Let $\Gamma$ and $\Gamma'$ be two labeled graphs defining GBS groups $G$ and $G'$. Suppose $\Gamma$ is a $n$-rose graph with $n\leq 3$ and $G$ is non-ascending, then there exists an algorithm that determines whether $G$ and $G'$ are isomorphic.
\end{theorem}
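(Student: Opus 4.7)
The plan is to reduce the isomorphism question to a structured slide-reachability search in $\mathrm{RLG}(G)$, then use the commutation results of the previous section to render this search effective.

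\textbf{Preprocessing.} First I would apply collapse moves to $\Gamma'$ to obtain a reduced labeled graph $\Gamma'_{\mathrm{red}}$. If $G \cong G'$ then $\Gamma'_{\mathrm{red}} \in \mathrm{RLG}(G)$ up to admissible sign changes; since slide moves preserve the vertex set and $\Gamma$ has a single vertex, $\Gamma'_{\mathrm{red}}$ must also be a single-vertex $n$-rose graph with the same first Betti number $n$. If it is not, output ``not isomorphic.'' Since the hypothesis of Theorem \ref{thm: determine ascending} now applies to $G'$, use that algorithm to decide whether $G'$ is ascending; if so, return ``not isomorphic'' (as non-ascendingness is a group invariant). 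Trivial invariants such as $q(G) = q(G')$ can be checked in passing for early termination.

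\textbf{Reduction to slide reachability.} By Corollary \ref{coro: slide sequence}, $G \cong G'$ if and only if some admissible sign change $\Gamma''$ of $\Gamma'_{\mathrm{red}}$ is connected to $\Gamma$ by a finite sequence of slide moves, with all intermediate trees reduced. There are only finitely many admissible sign changes of $\Gamma'_{\mathrm{red}}$, so I would loop over them and reduce to a fixed-target reachability problem. By Proposition 3.19 of \cite{MM08}, in any such slide sequence one may push all slides of non-mobile edges to the front; the non-mobile edges are tightly constrained (they support no strict integer cycle), so only finitely many enumerable intermediate graphs $\widetilde{\Gamma}$ arise from non-mobile slides. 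For each such $\widetilde{\Gamma}$, Proposition \ref{prop: e commute} regroups the remaining mobile-edge slides into blocks, each block involving only one geometric mobile edge; since there are at most $k \le n \le 3$ mobile edges, there are at most $k! \le 6$ block orderings to try.

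\textbf{Effective slide-space check.} For each ordering $(e_{\sigma(1)}, \ldots, e_{\sigma(k)})$ and each $j$, I must decide whether the target pair $\bigl(\lambda''(e_{\sigma(j)}), \lambda''(\overline{e}_{\sigma(j)})\bigr)$ is attainable by slides of $e_{\sigma(j)}$ and $\overline{e}_{\sigma(j)}$ starting from the graph produced by the earlier blocks. As in the proof of Proposition \ref{prop: smc algorithm }, after fixing the list of primes appearing in labels, the set of attainable label pairs on $e_{\sigma(j)}$ corresponds to the lattice points of an explicit polyhedron in $\mathbb{R}^{n-1}$ (parameterizing exponents of slides over the other geometric edges). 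Deciding whether the target exponents lie in this polyhedron is an integer linear programming instance, solvable by the algorithm of \cite{L83}. Chaining these decisions block-by-block, and looping over admissible sign changes, non-mobile intermediate graphs, and block orderings, yields the sought algorithm.

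\textbf{Main obstacle.} The delicate point will be making the block-wise reachability check propagate correctly: when one slides the $j$-th mobile edge into its target configuration, the labels and slide spaces available to the $(j+1)$-st mobile edge in the next block must still be effectively describable, and the polyhedral descriptions must compose in a way that respects the admissible sign changes and the restrictions of Proposition \ref{prop: e commute}. In particular, I would need to verify that at each stage only a bounded amount of auxiliary data on the remaining mobile edges needs to be tracked, so that the ILP check at the next block remains well-posed. Once this bookkeeping is in place, the whole algorithm is a finite enumeration of decidable instances, giving Theorem \ref{theorem: main result 3}.
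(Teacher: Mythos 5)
Your proposal is correct and follows essentially the same route as the paper: separate the non-mobile slides (finitely many outcomes, via $\mathrm{S}_{NM}(\Gamma)$ and matching non-mobile subgraphs) from the mobile slides, block-decompose the latter using Proposition \ref{prop: e commute}, and decide each block's reachability by the prime-exponent lattice-point and integer-programming method of Proposition \ref{prop: smc algorithm }. The propagation issue you flag as the main obstacle is in fact harmless: sliding one mobile edge of a rose never changes the labels of the other edges, so the intermediate graph after each block is completely determined by the target label pair, and the next ILP instance is explicit --- which is exactly how the paper iterates its check $k$ times.
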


We remark that, by Theorem \ref{thm: determine ascending}, there is an algorithm to determine whether a $n$-rose GBS group with $n\leq 3$ is non-ascending or not. We also remark that Example \ref{example: 3-rose non-ascending} is an infinite family of GBS groups for which the isomorphism problem can be solved using the algorithm in the above theorem. 

To prove the above theorem, we will introduce the non-mobile subgraph of a given labeled graph that plays a key role in our proof.

\begin{definition}\cite{MM08}
We denote by $\Gamma_{non}\subset \Gamma$ the subgraph obtained from $\Gamma$ by discarding the mobile edges and any vertices incident to a strict ascending loop. $\Gamma_{non}$ is then called the \textit{non-mobile subgraph} of $\Gamma$.
\end{definition}

\begin{definition}
We denote by $\mbox{S}_{NM}(\Gamma)\subseteq$ RLG($G$) the set of reduced labeled graphs obtained from $\Gamma$ by a sequence of slides of non-mobile edges in $\Gamma$.
\end{definition}
By Proposition 3.10 in \cite{MM08}, we know that $|\mbox{S}_{NM}(\Gamma)|$ is finite.

\textbf{Proof of Theorem \ref{theorem: main result 3}} First we make both $\Gamma$ and $\Gamma'$ reduced by performing collapse moves. We may assume that $\Gamma,\Gamma'\in$ RLG($G^{+}$), by Lemma 2.7 in \cite{MM08}, since the orientation homomorphisms of $\Gamma$ and $\Gamma'$ agree. Suppose that $G\cong G'$. By Corollary \ref{coro: slide sequence}, there is a sequence of slide moves taking $\Gamma$ to $\Gamma'$. In addition, by Corollary 3.24 in \cite{MM08}, this slide sequence may be chosen so that there exists an intermediate graph $\Gamma''\in$ RLG($G^+$)
$$\Gamma\rightarrow...\rightarrow \Gamma''\rightarrow...\rightarrow\Gamma'$$ such that the subsequence $\Gamma\rightarrow...\rightarrow\Gamma''$ is a family of slides of non-mobile edges only and the subsequence $\Gamma''\rightarrow...\rightarrow\Gamma'$ is a family of slides of mobile edges only. Since all non-mobile edges stay stationary in the subsequence $\Gamma''\rightarrow...\rightarrow \Gamma'$, then the non-mobile subgraph of $\Gamma''$ and $\Gamma'$ agree. Let $e_1,...,e_k$ be mobile edges of $\Gamma$ for some $0\leq k\leq 3$. By Corollary 3.15 in \cite{MM08}, they are mobile edges of $\Gamma''$ too. By Proposition \ref{prop: e commute}, there is a sequence $$\Gamma''=\Gamma_{0}\rightarrow...\rightarrow\Gamma_{1}\rightarrow...\rightarrow\Gamma_{k-1}\rightarrow...\rightarrow \Gamma_{k}=\Gamma'$$
such that the subsequences $\Gamma_{j-1}\rightarrow \Gamma_{j}$ are slides of the geometric edge $e_j,\overline{e}_j$ only for $j=1,...,k$. Thus our original slide sequence becomes
$$\Gamma\rightarrow...\rightarrow\Gamma''=\Gamma_{0}\rightarrow...\rightarrow\Gamma_{1}\rightarrow...\rightarrow\Gamma_{k-1}\rightarrow...\rightarrow \Gamma_{k}=\Gamma'$$

Now we establish an algorithm to find such a slide sequence.

First, search all labeled graphs in $\mbox{S}_{NM}(\Gamma)$.
\begin{itemize}[topsep=0pt, itemsep=1ex]
\item[-] If there is no labeled graph in $\mbox{S}_{NM}(\Gamma)$ that has the non-mobile subgraph isomorphic to $\Gamma'_{non}$, then $G\not\cong G'$.
\item[-] Otherwise we denote by $\mbox{S}^{A}_{NM}(\Gamma)$ the set of all labeled graphs in $\mbox{S}_{NM}(\Gamma)$ such that the non-mobile subgraph agrees with $\Gamma_{non}'$ and we run the algorithm below.
\end{itemize}

Let $\Gamma''\in \mbox{S}^{A}_{NM}(\Gamma)$. We now provide an algorithm to check that whether there exists a slide subsequence $\Gamma''=\Gamma_0\rightarrow...\rightarrow \Gamma_1$.
We denote the set of mobile edges in $\Gamma'$ by $\mbox{ME}(\Gamma')$. We check that whether there exists a mobile edge $e$ in $\Gamma''$ such that we can slide it with the resulting labels $\lambda(e)=\lambda'(f)$ and $\lambda(\overline{e})=\lambda'(\overline{f})$ for some $f\in \mbox{ME}(\Gamma')$. This can be done in the following way:

Let $e$ be a mobile edge in $\Gamma''$.
We provide a method to check that whether we can obtain $\lambda(e)=\lambda'(f)$, the other equation above can be checked by the same way. We first denote the petals of $\Gamma''$ by $g_1,...,g_n$. Without loss of generality, we may assume that $e=g_1$. Recall that, similarly to Proposition \ref{prop: smc algorithm }, we can associate a $r$-tuple of integers for each label in $\Gamma''$, where $r$ is the number of different prime factors of modulus of all petals in $\Gamma''$. We denote by $(\sigma_1,...,\sigma_r)$ the $r$-tuple of the label $\lambda''(e)$ of $e$ in $\Gamma''$. Since the slide move doesn't change the prime factors, we can also associate a $r$-tuple of integers for each label in $\Gamma'$. Given $f\in \mbox{ME}(\Gamma')$, we then denote by $(\tau_1,...,\tau_r)$ the $r$-tuple of $\lambda'(f)$. Recall that each element in the set $$\Lambda(\overline{e})=\bigcup_{i=1}^L\Bigl\{(y_2,...,y_n)\ |\  w^i_l\leq \sigma_l+\sum_{h=2}^n y_h\alpha_l^h\ \mbox{for all}\ l=1,...,r\Bigl\}$$
corresponds to a possible label $\lambda''(e)\prod_{h=2}^nq(g_h)^{y_h}$ of $e$ as a consequence of slide moves of $e$, where $L$ is finite, $(w_1^i,...,w_r^i)$ is a computable vector in $\mathbb{Z}_{\geq 0}^r$ for each $i$ and $(\alpha_1^h,...,\alpha_r^h)$ is the $r$-tuple of $q(g_h)$. It now suffices to check that whether there exists an element $(y_2,...,y_n)\in \Lambda(\overline{e})$ such that $\lambda''(e)\prod_{h=2}^nq(g_h)^{y_h}=\lambda'(f)$. This can be done by checking that whether $w_l^i\leq \tau_l$ for each $l=1,...,r$ and $i=1,...,L$, and that whether there exists a solution $(y_2,...,y_n)$ of the linear system $A(y_2,...,y_n)^{\intercal}=(\tau_1-\sigma_1,...,\tau_r-\sigma_r)^{\intercal}$ where $A=(\alpha_l^h)$ is a $r\times (n-1)$ integer matrix. Finally, by repeating the above process for every mobile edge in $\mbox{ME}(\Gamma')$, we can determine whether we can slide the mobile edge $e$ in $\Gamma''$ to obtain a desired label $\lambda(e)=\lambda'(f)$.

Now we have the following two cases:
\begin{itemize}[topsep=0pt, itemsep=1ex]
\item[-] If there exists such a mobile edge $e$, we then denote by $\Gamma_1$ the labeled graph obtained from $\Gamma''$ by sliding $e$ with the resulting labels $\lambda(e)=\lambda'(f)$ and $\lambda(\overline{e})=\lambda'(\overline{f})$. Note that $\Gamma_1$ not only has $\Gamma_{non}'$ as the non-mobile subgraph but also has the position and label of one mobile edge agreeing with $\Gamma'$. In addition, there exists a slide subsequence $\Gamma''=\Gamma_0\rightarrow ...\rightarrow 
\Gamma_1$. Now we can check that whether there exists the rest slide sequence $\Gamma_1\rightarrow...\rightarrow \Gamma_{k-1}\rightarrow...\rightarrow \Gamma_k$ by keep repeating the above algorithm $k$ times.
\item[-] If there is no mobile edge $e$ that satisfies the above criterion, then there is no slide sequence of mobile edges between the given $\Gamma''$ and $\Gamma'$. 
\end{itemize} 
Finally, we can repeat the above algorithm to each labeled graph in $\mbox{S}_{NM}^A(\Gamma)$. By the above argument, $G$ and $G'$ are isomorphic if and only if we can find a slide sequence of mobile edges between $\Gamma''$ and $\Gamma'$ for some $\Gamma''\in \mbox{S}^{A}_{NM}(\Gamma)$ and $\Gamma'$.

\end{document}